\numberwithin{equation}{section}
\numberwithin{figure}{section}
\theoremstyle{plain}
\newtheorem{thm}{\protect\theoremname}[section]
  \theoremstyle{plain}
  \newtheorem{conjecture}[thm]{\protect\conjecturename}
  \theoremstyle{definition}
  \newtheorem{defn}[thm]{\protect\definitionname}
  \theoremstyle{remark}
  \newtheorem{rem}[thm]{\protect\remarkname}
  \theoremstyle{plain}
  \newtheorem{lem}[thm]{\protect\lemmaname}
  \providecommand{\conjecturename}{Conjecture}
  \providecommand{\definitionname}{Definition}
  \providecommand{\lemmaname}{Lemma}
  \providecommand{\remarkname}{Remark}
\providecommand{\theoremname}{Theorem}
\begin{document}

\title{Semicharacters of Groups}

\subjclass[2000]{20D99}

\author{Gil Alon}
\begin{abstract}
We define the notion of a semicharacter of a group $G$: A function
from the group to $\mathbb{C}^{*}$, whose restriction to any abelian
subgroup is a homomorphism. We conjecture that for any finite group,
the order of the group of semicharacters is divisible by the order
of the group. We prove that the conjecture holds for some important
families of groups, including the Symmetric groups and the groups
$GL(2,q)$.
\end{abstract}
\maketitle

\section{Introduction}

If $G$ is a finite abelian group, the dual group $\widehat{G}=\hom(G,\mathbb{C}^{*})$
is isomorphic to $G$. For a general finite group $G$, $\hom(G,\mathbb{C}^{*})$
may not provide much information about the group: For example, when
$G$ is simple and nonabelian, $\hom(G,\mathbb{C}^{*})=\{1\}.$ In
this paper we look not only at homomorphisms from $G$ to $\mathbb{C}^{*}$,
but at a wider class of maps: Let us call a function $f:G\rightarrow\mathbb{C}^{*}$
a \emph{linear semicharacter} (or, more briefly, a \emph{semicharacter)}
if it satisfies $f(ab)=f(a)f(b)$ for all pairs $a,b\in G$ of \emph{commuting
}elements. In other words, a semicharacter on $G$ is a function whose
restriction to any abelian subgroup is a character.

The set of semicharacters of $G$, with the operation of pointwise
multiplication, is an abelian group. Let us denote this group by $\widehat{G}$,
as this notation is consistent with the case where $G$ is abelian. 

It is easy to see (lemma \ref{lem:Finiteness}) that $\widehat{G}$
is always finite. The first question that one may ask is whether $\widehat{G}$
is always nontrivial whenever $G$ is. This is true (see Theorem \ref{thm:For-any-finite}),
and the proof that we have uses the fact that any simple group has
a cyclic Sylow subgroup. This fact depends on the classification of
finite simple groups.

More generally, we will be interested in the relation between the
order of $\widehat{G}$ and the order of $G.$ We have investigated
this relationship for several central families of groups, listed below,
and also for all groups of order $\leq255$ using a computer (see
section \ref{sec:Numerical-evidence}). In all these cases, we have
noticed a remarkable phenomenon: The order of $\widehat{G}$ is divisible
by the order of $G$. This leads us to following conjecture:
\begin{conjecture}
\label{conjecture}For any finite group $G$, $|\widehat{G}|$ is
divisible by $|G|$.
\end{conjecture}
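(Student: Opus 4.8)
Since $\widehat{G}$ is a finite abelian group (Lemma~\ref{lem:Finiteness}), it is enough to verify the divisibility one prime at a time: write $\widehat{G}\cong\prod_{p}\widehat{G}_{(p)}$, where $\widehat{G}_{(p)}$ is the Sylow $p$-subgroup of $\widehat{G}$, consisting of the semicharacters whose values are $p$-power roots of unity, so that the conjecture becomes the family of assertions $|\widehat{G}_{(p)}|\ge|G|_{p}$, one for each prime $p$ dividing $|G|$ (here $|G|_{p}$ is the $p$-part of $|G|$). A semicharacter valued in $\mu_{p^{\infty}}$ kills every element of order prime to $p$; since each $g\in G$ is a product $g=g_{p}g_{p'}$ of two commuting powers of itself, with $g_{p}$ of $p$-power order and $g_{p'}$ of order prime to $p$, such a semicharacter is determined by its restriction to the set of $p$-elements of $G$, and conversely any function on the $p$-elements that restricts to a homomorphism on every abelian $p$-subgroup extends, via $g\mapsto f(g_{p})$, to an element of $\widehat{G}_{(p)}$. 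Thus $\widehat{G}_{(p)}$ is exactly the group of these ``$p$-local'' semicharacters, and it is governed by a Sylow $p$-subgroup $P$ of $G$ together with the $G$-fusion among the subgroups of $P$.

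The plan is then to produce semicharacters from two sources and run an induction on $|G|$. First, for every normal subgroup $N\trianglelefteq G$, inflation along $G\twoheadrightarrow G/N$ embeds $\widehat{G/N}$ into $\widehat{G}$, because commuting elements of $G$ have commuting images in $G/N$; hence $|\widehat{G}|$ is divisible by $|\widehat{G/N}|$ for every proper quotient, and in particular by $|G^{\mathrm{ab}}|=|\widehat{G^{\mathrm{ab}}}|$. In a minimal counterexample this accounts for the contribution of every proper quotient, and the remaining task is to manufacture the ``missing'' factor. That should come from the $p$-local picture: one fixed Sylow subgroup contributes $\widehat{P}$ by restriction, the several conjugates of $P$ tend to push $|\widehat{G}_{(p)}|$ well above $|P|$, and, crucially, any element of $\widehat{P}$ that is invariant under $G$-fusion extends canonically to a $p$-local semicharacter of $G$ (define its value on a $p$-element $x$ as its value on a $P$-conjugate of $x$; well-definedness is fusion-invariance, and multiplicativity on commuting $p$-elements follows because they generate an abelian $p$-subgroup, which is $G$-conjugate into $P$). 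Putting these together, the crux of the conjecture is the two extreme cases in which inflation contributes nothing: (i) $G$ a $p$-group, where the claim reduces to $|P|\mid|\widehat{P}|$; and (ii) $G$ simple (or quasisimple), where one must argue directly from the internal structure --- with the residual problem of controlling how the new semicharacters at successive chief factors accumulate.

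The main obstacle is precisely (i) and (ii): with no quotient available to induct on, the semicharacters have to be exhibited by hand, which is exactly why the present paper treats $S_{n}$ and $GL(2,q)$ as separate cases, exploiting sign and cycle-type invariants in the first and the maximal tori and the determinant in the second; and the fact that even the non-triviality of $\widehat{G}$ (Theorem~\ref{thm:For-any-finite}) rests on the classification of finite simple groups is a warning that the full statement will not be elementary. For a uniform argument I would try, in increasing order of ambition: for $p$-groups, an induction on the order, using the Frattini quotient for the inflation part and the fusion-extension device to climb back up, or alternatively an induction on coclass; a cohomological reformulation, identifying $\widehat{G}$ with the inverse limit of the character groups $\widehat{A}$ taken over the poset of abelian subgroups of $G$, and estimating its order through the homology of the nerve of that poset (equivalently, the commuting complex of $G$), ideally via a Brown/Euler-characteristic type identity; and, for the groups of Lie type, a construction of semicharacters from maximal tori and the action of the Weyl group, in the spirit of Deligne--Lusztig parametrization. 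To my mind a clean, structure-free lower bound $|\widehat{P}|\ge|P|$ valid for all finite $p$-groups $P$ is the one missing ingredient that would put the general conjecture within reach.
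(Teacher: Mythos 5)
This statement is a \emph{conjecture}, not a theorem: the paper does not prove it for general finite groups, only for the specific families $S_n$, $A_n$, $GL(2,q)$, and $U(n,q)$ with $p\geq n$. So there is no ``paper's own proof'' to compare against, and your submission, as you yourself acknowledge in the final paragraphs, is a strategy sketch rather than a proof. What you write is a candid map of the problem, not a solution to it.

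That said, the sketch contains several correct observations that do align with the paper's machinery. Your reduction to one prime at a time, via the decomposition $\widehat{G}\cong\prod_p\widehat{G}_{(p)}$ and the identification of $\widehat{G}_{(p)}$ with functions on the set of $p$-elements that are homomorphic on every abelian $p$-subgroup, is exactly the paper's Lemma~\ref{lem:(1)-A-function} (the extension $g\mapsto f(g_p)$ is the paper's $\tilde f(g)=f(g^n)^b$). Your inflation remark is correct but, as you note, contributes nothing in the crucial cases of $p$-groups and nonabelian simple groups. Your fusion-stable extension device (define $f$ on a $p$-element by conjugating into a fixed Sylow $P$) is sound, but in general it produces only the fusion-invariant part of $\widehat{P}$, which can be far too small; the paper's constructions (e.g.\ Lemma~\ref{lem:CyclicSylow}, or Lemma~\ref{lem:l divides q-1=00003D>conj}) succeed precisely by letting the value vary from one Sylow subgroup to another, using the intersection pattern rather than fusion-invariance.

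The genuine gap you name --- a uniform lower bound $|\widehat{P}|\geq|P|$ for all finite $p$-groups $P$ --- is indeed the heart of the matter and is not in the paper (the unitriangular result is proved only for $p\geq n$, and the numerical check stops at order $255$). Likewise, there is no argument in the paper, or in your sketch, that would let one ``accumulate'' semicharacters along a chief series; inflation plus a single Sylow contribution does not obviously compound, because the $p$-local semicharacter group is not multiplicative along extensions. So the assessment to give is: the statement remains a conjecture, your reduction steps are correct and match the paper's framework, and the obstacles you identify are real and unresolved both in your sketch and in the paper.
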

The conjecture obviously holds for abelian groups, in which case $\widehat{G}\cong G$.
We will prove that the conjecture holds for the following families
of groups:
\begin{itemize}
\item The Symmetric groups (Theorem \ref{thm:Sn}).
\item The Alternating groups (Theorem \ref{thm:An}).
\item The Linear groups $GL(2,q)$ (Theorem \ref{thm:GL(n)}).
\item The Unitriangular groups $U(n,q)$ when $q=p^{e}$ and $p>n$ (Theorem
\ref{thm:unipotent}).
\end{itemize}
We will prove all these statements by constructing semicharacters
explicitly. The techniques for construction depend heavily on the
structure of the given group, and some of them seem to be of independent
interest.

As an example, let us look at the Heisenberg group over a finite field
$\mathbb{F}$. Consider the three functions from the group to the
additive group $\mathbb{F}^{+}$, defined by 
\[
f_{1}\left(\left(\begin{array}{ccc}
1 & a & b\\
0 & 1 & c\\
0 & 0 & 1
\end{array}\right)\right)=a;\, f_{2}\left(\left(\begin{array}{ccc}
1 & a & b\\
0 & 1 & c\\
0 & 0 & 1
\end{array}\right)\right)=c;\, f_{3}\left(\left(\begin{array}{ccc}
1 & a & b\\
0 & 1 & c\\
0 & 0 & 1
\end{array}\right)\right)=ac-2b
\]
$f_{1}$ and $f_{2}$ are group homomorphisms. $f_{3}$ is not a homomorphism,
but it satisfies $f(AB)=f(A)+f(B)$ whenever $A$ and $B$ commute,
as can be verified directly. By taking linear combinations of these
$3$ functions, and composing them with a homomorphism from $\mathbb{F}^{+}$
to $\mathbb{C}^{*}$, we get enough semicharacters on the group.

In the proof of Theorem \ref{thm:unipotent}, we will generalize this
construction to upper triangular unipotent matrices via a discrete
version of the $\log$ function.

For groups $G$ that are not $p$-groups, we consider the $l$-part
of $\widehat{G}$ separately for each prime divisor $l$ of $|G|$,
and prove a localization principle: the $l$-part of $\widehat{G}$
depends only on the $l$-Sylow subgroups of $G$ and their intersection
pattern (see lemma \ref{lem:(1)-A-function}). For the symmetric group,
we will construct semicharacters using the cycle decomposition of
permutations. Basically, one is free to define the value of a semicharacter
in the $l$-torsion part of $\widehat{S_{n}}$ on cycles of maximal
$l$-power size. For the linear groups $GL(2,p^{t})$, we will use
various techniques to construct semicharacters, depending on $p^{t}$
and $l$. At $l=p$ our construction is similar to the case of the
unitriangular groups.

Although we currently do not have an application to the concept of
the semicharacter group, we find this construction interesting in
its own right. Such a simple definition brings about what seems to
be a natural and nontrivial conjecture, and the techniques for proving
the conjecture for various families of groups are quite diverse. Aside
from conjecture \ref{conjecture}, other interesting questions are
whether some information about $\widehat{G}$ can be inferred from
the groups $\widehat{G_{i}}$, if $G_{1},..,G_{k}$ are the composition
factors of $G$; whether conjecture \ref{conjecture} can be proved
for all simple groups; and whether theorem \ref{thm:For-any-finite}
can be proved independently of the classification of finite simple
groups.

\section{Some notation and lemmas}

All the groups in this paper are assumed to be finite. We repeat the
definitions in the introduction: 
\begin{defn}
(1) A \emph{linear semicharacter }on a group $G$ is a function $f:G\rightarrow\mathbb{C}^{*}$
satisfying $f(ab)=f(a)f(b)$ for all pairs $a,b\in G$ of\emph{ }commuting\emph{
}elements.

(2) The group of linear semicharacters of $G$ is denoted by $\widehat{G}$.\end{defn}
\begin{rem}
For the sake of brevity, we will use the term \emph{semicharacters
}for linear semicharacters. We must, however, caution the reader that
the term \emph{semicharacters} has other meanings in the contexts
of Lie groups and semigroups.
\end{rem}
We obtain a contravariant functor~ $\widehat{}$ ~from groups to
abelian groups, sending a group $G$ to $\widehat{G}$, and a group
homomorphism $\phi:G\rightarrow H$ to the homomorphism $\widehat{\phi}:\widehat{H}\rightarrow\widehat{G}$
defined by $\widehat{\phi}(f)=f\circ\phi$.
\begin{lem}
\label{lem:Finiteness}For any $G$, $\widehat{G}$ is a finite abelian
group, and its order divides $\prod_{g\in G}ord(g)$.\end{lem}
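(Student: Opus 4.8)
The plan is to embed $\widehat{G}$ into an explicit finite abelian group. First I would verify that $\widehat{G}$ really is a group under pointwise multiplication: if $f_{1},f_{2}$ are semicharacters and $a,b\in G$ commute, then $(f_{1}f_{2})(ab)=f_{1}(ab)f_{2}(ab)=f_{1}(a)f_{1}(b)f_{2}(a)f_{2}(b)=(f_{1}f_{2})(a)(f_{1}f_{2})(b)$, so $f_{1}f_{2}$ is again a semicharacter; the constant function $1$ is a semicharacter and serves as the identity element; and $g\mapsto f(g)^{-1}$ is the pointwise inverse of a semicharacter $f$. Commutativity of $\widehat{G}$ is inherited from $\mathbb{C}^{*}$.

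The key point is that for every $g\in G$, any semicharacter $f$ restricts to a homomorphism on the cyclic (hence abelian) subgroup $\langle g\rangle$, so $f(g)^{ord(g)}=f(g^{ord(g)})=f(1)=1$. Thus $f(g)$ lies in the group $\mu_{ord(g)}$ of complex $ord(g)$-th roots of unity. I would then consider the evaluation map $\Phi\colon\widehat{G}\rightarrow\prod_{g\in G}\mu_{ord(g)}$ given by $\Phi(f)=(f(g))_{g\in G}$. It is well defined by the preceding sentence, it is a group homomorphism since the operation on $\widehat{G}$ is pointwise, and it is injective because a function on $G$ is determined by its list of values. Hence $\widehat{G}$ is isomorphic to a subgroup of the finite abelian group $\prod_{g\in G}\mu_{ord(g)}$, which has order $\prod_{g\in G}ord(g)$. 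Finiteness is then immediate, and Lagrange's theorem yields that $|\widehat{G}|$ divides $\prod_{g\in G}ord(g)$.

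I do not expect any genuine obstacle in this lemma: the argument uses commutativity only through the trivial fact that $g$ commutes with its own powers, so the crude product over all of $G$ is all one needs for the stated bound. The substantive content of the paper lies in the opposite direction — producing enough semicharacters to bound $|\widehat{G}|$ from below — and none of that is needed here.
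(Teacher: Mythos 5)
Your proof is correct and follows the same route as the paper: observe $f(g)^{ord(g)}=f(g^{ord(g)})=1$ and embed $\widehat{G}$ via evaluation into $\prod_{g\in G}\mu_{ord(g)}$, a finite abelian group of order $\prod_{g\in G}ord(g)$. The extra detail you supply (checking $\widehat{G}$ is a group, naming the map $\Phi$) is all implicit in the paper's terser version.
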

\begin{proof}
If $f\in\widehat{G}$ then for every $g\in G$, $f(g)^{ord(g)}=f(g^{ord(g)})=1$.
Hence, $\widehat{G}$ embeds in the group of functions $f:G\rightarrow\mathbb{C}^{*}$
satisfying $f(g)^{ord(g)}=1$ for all $g\in G$. Since the latter
group is finite of order $\prod_{g\in G}ord(g)$, the result follows
immediately.\end{proof}
\begin{defn}
For any group $G$, and any prime number $l$, we denote by $G[l^{\infty}]$
the set of elements whose order is a power of $l$, and by $G[l]$
the set $\{g\in G|g^{l}=id\}$.
\end{defn}
If $G$ is abelian, then $G[l^{\infty}]$ and $G[l]$ are subgroups
of $G$. We have

\[
\widehat{G}=\prod_{l}\widehat{G}[l^{\infty}]
\]

And each $\widehat{G}[l^{\infty}]$ is of $l$-power size. We will
now see that $\widehat{G}[l^{\infty}]$ depends only on the elements
of $G[l^{\infty}]$ and their multiplication table. Let us first introduce
a convenient notation:
\begin{defn}
For any nonempty subset $A\subseteq G$, we denote by $\widehat{A}$
the group of maps $f:A\rightarrow\mathbb{C}^{*}$ such that $f(xy)=f(x)f(y)$
whenever $x,y$ are commuting elements in $A.$ 
\end{defn}
Note that a function $f:G[l^{\infty}]\rightarrow\mathbb{C}^{*}$ is
in $\widehat{G[l^{\infty}]}$ if and only if for any $l$-Sylow subgroup
$L\leq G$, $f|_{L}\in\widehat{L}$. 
\begin{lem}
\label{lem:(1)-A-function}The restriction map from $\widehat{G}[l^{\infty}]$
to $\widehat{G[l^{\infty}]}$ is an isomorphism. \end{lem}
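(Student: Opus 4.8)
The plan is to build an explicit two-sided inverse of the restriction map, using the \emph{$l$-part} of a group element. For $g\in G$ with $\mathrm{ord}(g)=l^{a}m$ and $\gcd(l,m)=1$, let $g_{l}$ be the unique power of $g$ lying in $\langle g\rangle$ with $\mathrm{ord}(g_{l})=l^{a}$ and $\mathrm{ord}(gg_{l}^{-1})$ prime to $l$, and set $g_{l'}=gg_{l}^{-1}$; such $g_{l}$ exists by the primary (Chinese Remainder) decomposition of the cyclic group $\langle g\rangle$, and $g=g_{l}g_{l'}$ with $g_{l}$ and $g_{l'}$ commuting, $g_{l}\in G[l^{\infty}]$, and $\mathrm{ord}(g_{l'})$ prime to $l$.

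The structural fact I would isolate first is: \emph{if $a,b\in G$ commute, then $(ab)_{l}=a_{l}b_{l}$.} To prove it, work inside the finite abelian group $A=\langle a,b\rangle$; its primary decomposition $A=\prod_{p}A[p^{\infty}]$ yields the projection homomorphism $\pi_{l}\colon A\to A[l^{\infty}]$, and one checks that $\pi_{l}(x)=x_{l}$ for every $x\in A$ (the component $\pi_{l}(x)$ is a power of $x$, is an $l$-element, and $x\pi_{l}(x)^{-1}$ is an $l'$-element, so it matches the uniqueness characterization of $x_{l}$). Hence $x\mapsto x_{l}$ is a homomorphism on $A$, so in particular $(ab)_{l}=a_{l}b_{l}$. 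This is the crux; the rest is bookkeeping. I also note at the outset that the restriction map $r\colon\widehat{G}[l^{\infty}]\to\widehat{G[l^{\infty}]}$ is a well-defined group homomorphism: restricting $f\in\widehat{G}$ to the subset $G[l^{\infty}]$ plainly preserves multiplicativity on commuting pairs, the target $\widehat{G[l^{\infty}]}$ is an $l$-group since its order divides $\prod_{x\in G[l^{\infty}]}\mathrm{ord}(x)$ (a power of $l$) by the argument of Lemma \ref{lem:Finiteness}, and restriction commutes with pointwise multiplication.

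For injectivity, suppose $f\in\widehat{G}[l^{\infty}]$ restricts to the trivial function on $G[l^{\infty}]$. For any $g\in G$, the factorization $g=g_{l}g_{l'}$ into commuting elements gives $f(g)=f(g_{l})f(g_{l'})$; here $f(g_{l})=1$ because $g_{l}\in G[l^{\infty}]$, while $f(g_{l'})$ is a root of unity whose order divides both a power of $l$ (as $f\in\widehat{G}[l^{\infty}]$) and $\mathrm{ord}(g_{l'})$, which is prime to $l$, hence $f(g_{l'})=1$. So $f\equiv1$. For surjectivity, given $h\in\widehat{G[l^{\infty}]}$, define $f(g)=h(g_{l})$ for all $g\in G$ (well defined since $g_{l}$ is determined by $g$). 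Then $f$ agrees with $h$ on $G[l^{\infty}]$, where $g_{l}=g$; $f$ has $l$-power order because $h$ does (if $h^{N}=1$ then $f(g)^{N}=h(g_{l})^{N}=1$); and $f\in\widehat{G}$ because for commuting $a,b$ the elements $a_{l},b_{l}$ commute and lie in $G[l^{\infty}]$, so by the structural fact $f(ab)=h((ab)_{l})=h(a_{l}b_{l})=h(a_{l})h(b_{l})=f(a)f(b)$. Hence $f\in\widehat{G}[l^{\infty}]$ and $r(f)=h$, so $r$ is an isomorphism.

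I expect the only genuine obstacle to be the structural fact $(ab)_{l}=a_{l}b_{l}$, i.e.\ verifying that the elementwise $l$-part operation is actually multiplicative along a commuting pair; this is exactly the point where finiteness of $G$ and the primary decomposition of finite abelian groups enter, and once it is in hand the injectivity and surjectivity checks are immediate.
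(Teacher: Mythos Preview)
Your proof is correct and follows essentially the same strategy as the paper's: build an explicit inverse to restriction by sending $h$ to the semicharacter $g\mapsto h(\text{$l$-part of }g)$. The paper packages this more tersely via a single global exponent: writing $|G|=nl^{a}$ with $l\nmid n$ and choosing $b$ with $bn\equiv 1\pmod{l^{a}}$, it sets $\tilde f(g)=f(g^{n})^{b}$, so the required multiplicativity on commuting pairs reduces to the trivial identity $(ab)^{n}=a^{n}b^{n}$ rather than your structural fact $(ab)_{l}=a_{l}b_{l}$. The two constructions coincide, since $g^{n}=(g_{l})^{n}$ and $h(g_{l})^{nb}=h(g_{l})$; your version is a bit more conceptual in isolating the primary decomposition as the real content, while the paper's sidesteps that lemma at the cost of a one-line exponent computation.
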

\begin{proof}
We have to prove that each function in $\widehat{G[l^{\infty}]}$
has a unique extension to $G$ which is in $\widehat{G}[l^{\infty}].$
Let $|G|=nl^{a}$ with $l\nmid n$, and let $b\in\mathbb{Z}$ be such
that $bn\equiv1(mod$ $l^{a})$. Then $\tilde{f}(g)=f(g^{n})^{b}$
is the required extension of $f$. The extension is unique because
any $f\in\widehat{G}[l^{\infty}]$ satisfies $f(g)=f(g^{n})^{b}.$
\end{proof}
Lemma \ref{lem:(1)-A-function} will be our main tool for constructing
semicharacters on groups. 
\begin{rem}
It follows from lemma \ref{lem:(1)-A-function} that $\widehat{G}[l]\cong\widehat{G[l^{\infty}]}[l]$.
We will view elements of the right hand side as functions $f:G[l^{\infty}]\rightarrow\mathbb{F}_{l}^{+}$
satisfying $f(ab)=f(a)+f(b)$ if $ab=ba$. It follows that if we can
find $d$ such functions which are linearly independent over $\mathbb{F}_{l}$,
then $|\widehat{G}|$ is divisible by $l^{d}$.
\end{rem}

\section{Nontriviality}
\begin{lem}
\label{lem:CyclicSylow}Let $G$ be a finite group, and $l|\,|G|$
a prime such that the $l$-Sylow subgroups of $G$ are cyclic. Let
$N$ be the number of $l$-Sylow subgroups of $G$. Then $|\widehat{G}|$
is divisible by $l^{N}.$\end{lem}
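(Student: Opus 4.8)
The plan is to exhibit $N$ functions $f_1,\dots,f_N\colon G[l^\infty]\to\mathbb{F}_l^+$ that are additive on commuting pairs (i.e.\ $f_i(ab)=f_i(a)+f_i(b)$ whenever $ab=ba$) and linearly independent over $\mathbb{F}_l$; by the remark following Lemma~\ref{lem:(1)-A-function} this yields $l^N\mid|\widehat G|$. Let $L_1,\dots,L_N$ be the Sylow $l$-subgroups, each cyclic of order $l^a$, where $l^a$ is the full power of $l$ dividing $|G|$ (so $a\ge 1$, as $l\mid|G|$). Two preliminary remarks drive the argument: every element of $G[l^\infty]$ lies in some $L_i$, and if $x,y\in G[l^\infty]$ commute then $\langle x,y\rangle$ is an abelian $l$-subgroup, hence lies in some $L_i$. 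Consequently a function $f\colon G[l^\infty]\to\mathbb{F}_l^+$ is additive on commuting pairs if and only if $f|_{L_i}$ is a homomorphism for every $i$.

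The crucial point is this: for $i\ne j$, every homomorphism $L_i\to\mathbb{F}_l^+$ vanishes on $L_i\cap L_j$. Indeed $L_i\cap L_j$ is a proper subgroup of the cyclic $l$-group $L_i$ (proper since $|L_i|=|L_j|$), hence is contained in the subgroup $\{x^l:x\in L_i\}$ of $l$-th powers, and any homomorphism into the elementary abelian group $\mathbb{F}_l^+$ kills $l$-th powers. I expect this to be the one real obstacle: a priori, when the Sylow subgroups overlap nontrivially one worries that the agreement-on-overlaps conditions cut the dimension of the space of admissible $f$ below $N$, and this observation is exactly what makes those conditions vacuous modulo $l$.

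Granting this, fix for each $i$ a generator $g_i$ of $L_i$ and a surjection $\psi_i\colon L_i\to\mathbb{F}_l^+$, and define $f_i$ to equal $\psi_i$ on $L_i$ and $0$ on $G[l^\infty]\setminus L_i$. Then $f_i$ is well defined: the only possible ambiguity occurs on a set $L_i\cap L_j$ with $j\ne i$, where the value prescribed from $L_i$ is $\psi_i|_{L_i\cap L_j}=0$ by the crucial point, matching the value $0$ prescribed from $L_j$. It is additive on commuting pairs: by the first paragraph it suffices that $f_i|_{L_k}$ be a homomorphism for each $k$, and this restriction is $\psi_i$ when $k=i$ and is identically $0$ when $k\ne i$ (again by the crucial point applied to $L_i\cap L_k$). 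Finally the $f_i$ are linearly independent, since $g_i$ generates $L_i$ and hence lies in no other Sylow subgroup, so that $f_i(g_i)=\psi_i(g_i)\ne 0$ while $f_j(g_i)=0$ for all $j\ne i$; thus the matrix $(f_j(g_i))_{i,j}$ is diagonal with nonzero diagonal entries. Invoking the remark after Lemma~\ref{lem:(1)-A-function} with $d=N$ then completes the argument.
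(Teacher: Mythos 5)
Your proposal is correct and follows essentially the same approach as the paper: both rest on the observation that for $i\ne j$ the intersection $L_i\cap L_j$ is a proper subgroup of the cyclic $l$-group $L_i$, hence consists of $l$-th powers, so any homomorphism $L_i\to\mathbb{F}_l^+$ vanishes on it, which makes the Sylow-wise prescriptions glue consistently. The only cosmetic difference is that the paper phrases the conclusion as an injection $\prod_i\hom(L_i,\mathbb{F}_l^+)\hookrightarrow\widehat G[l^\infty]$, whereas you exhibit an explicit $\mathbb{F}_l$-basis $f_1,\dots,f_N$ of the image and verify independence via the diagonal matrix $(f_j(g_i))$.
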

\begin{proof}
Let $L_{1},\dotsc,L_{N}$ be the $l$-Sylow subgroups of $G$. For
any choice of homomorphisms $\phi_{i}:L_{i}\rightarrow\mathbb{F}_{l}^{+}$
for $1\leq i\leq N$, and any $1\leq i<j\leq N$, $\phi_{i}$ and
$\phi_{j}$ are both trivial on $L_{i}\cap L_{j}$ (since $L_{i}\cap L_{j}$
does not contain generators of $L_{i}$ or $L_{j}$, hence each element
$L_{i}\cap L_{j}$ is an $l$-power of elements of both $L_{i}$ and
$L_{j}$). Hence the $N$-tuple $(\phi_{1},\dotsc,\phi_{N})$ glues
to a function $\phi:G[l^{\infty}]\rightarrow\mathbb{F}_{l}^{+}$.
If $g,h\in G[l^{\infty}]$ commute then they lie in a common $l$-Sylow
subgroup, hence $\phi(gh)=\phi(g)+\phi(h)$. By lemma \ref{lem:(1)-A-function}
we get a monomorphism from $\prod_{i}\hom(L_{i},\mathbb{F}_{l}^{+})$
(which is of size $l^{N}$) to $\widehat{G}$.\end{proof}
\begin{thm}
For any finite group $G\neq\{1\}$, $\widehat{G}\neq\{1\}$.\label{thm:For-any-finite}\end{thm}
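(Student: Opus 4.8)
The plan is to reduce the statement to the situation already handled by Lemma~\ref{lem:CyclicSylow}. Given a nontrivial finite group $G$, pick any prime $l$ dividing $|G|$. If the $l$-Sylow subgroups of $G$ happen to be cyclic, then Lemma~\ref{lem:CyclicSylow} immediately gives $l^{N}\mid|\widehat{G}|$ with $N\geq1$, so $\widehat{G}\neq\{1\}$ and we are done. The difficulty is that for a general $G$ there need not be any prime $l$ with cyclic $l$-Sylow subgroups --- for instance, a group that is a direct product of several copies of $(\mathbb{Z}/p)^{2}$ over various primes. So the real work is to find, inside any nontrivial $G$, some subquotient or subgroup structure that does have a cyclic Sylow subgroup, and to transfer nontriviality of its semicharacter group back to $G$.

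The functoriality remarks in the excerpt suggest the mechanism. If $H\leq G$ is a subgroup, restriction gives a homomorphism $\widehat{G}\to\widehat{H}$; this need not be surjective, so nontriviality of $\widehat{H}$ does not obviously help. But if $N\trianglelefteq G$ is normal with quotient $Q=G/N$, the projection $G\to Q$ induces an \emph{injection} $\widehat{Q}\hookrightarrow\widehat{G}$ (pulling back a semicharacter of $Q$ along a surjection is again a semicharacter, since commuting elements map to commuting elements, and the pullback is nontrivial if the original is). Hence it suffices to find \emph{some} nontrivial quotient of $G$ --- or a nontrivial quotient of a subgroup whose $\widehat{\phantom{x}}$ injects into $\widehat{G}$ --- that we can handle. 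The cleanest route: take a composition series of $G$ and let $S$ be the top composition factor, so $S$ is simple and is a quotient of $G$; then $\widehat{S}\hookrightarrow\widehat{G}$, and it is enough to prove $\widehat{S}\neq\{1\}$ for $S$ simple. If $S$ is abelian it is cyclic of prime order and $\widehat{S}\cong S\neq\{1\}$.

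So the crux is the nonabelian simple case, and here I would invoke the fact flagged in the introduction: every nonabelian finite simple group $S$ has a cyclic Sylow subgroup for at least one prime $l$ dividing $|S|$ (a consequence of the classification of finite simple groups --- indeed one can take $l$ to be the largest prime divisor of $|S|$, whose Sylow subgroups are known to be cyclic in every simple group). Granting that, Lemma~\ref{lem:CyclicSylow} applies to $S$ and yields $l^{N}\mid|\widehat{S}|$ with $N\geq1$, so $\widehat{S}\neq\{1\}$, whence $\widehat{G}\neq\{1\}$.

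The main obstacle is precisely the appeal to the classification: without it, I do not see how to guarantee a cyclic Sylow subgroup in an arbitrary simple group, and the rest of the argument (composition series, the injection $\widehat{G/N}\hookrightarrow\widehat{G}$, the prime-order abelian case) is elementary bookkeeping. A secondary point to get right is the direction of functoriality --- one must use quotients, not subgroups, to push nontriviality \emph{into} $\widehat{G}$ --- but this is handled once one notes that $\widehat{\phantom{x}}$ sends surjections to injections.
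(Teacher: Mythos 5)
Your proof is correct and follows essentially the same route as the paper's: reduce to a simple quotient (the paper says tersely ``a semicharacter on a quotient of $G$ can be pulled back to $G$, so we may assume $G$ is simple''), invoke the classification-dependent fact that every finite simple group has a cyclic Sylow subgroup, and apply Lemma~\ref{lem:CyclicSylow}. The only cosmetic difference is that you treat the abelian simple case separately, whereas the paper lets Lemma~\ref{lem:CyclicSylow} absorb it, since a cyclic group of prime order is its own cyclic Sylow subgroup.
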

\begin{proof}
Since a semicharacter on a quotient of $G$ can be pulled back to
$G$, we may assume that $G$ is simple. By \cite[Theorem 4.9]{Kimmerle},
$G$ has a cyclic Sylow subgroup. The claim now follows from lemma
\ref{lem:CyclicSylow}.
\end{proof}

\section{The symmetric and alternating groups}

For $G=S_{n}$, the primes dividing $|G|$ are exactly the primes
$l\leq n$. Let $l$ be such a prime, and let $e$ be such that $l^{e}\leq n<l^{e+1}$.
\begin{lem}
\label{lem:If--commute,}If $\alpha,\beta\in S_{n}[l^{\infty}]$ commute,
and if $\alpha_{1}$ (resp. $\beta_{1}$) is an $l^{e}$- cycle in
the cycle decomposition of $\alpha$ (resp. $\beta$), then either
$\alpha_{1}$ and $\beta_{1}$ are disjoint, or each one is a power
of the other.\end{lem}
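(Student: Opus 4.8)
The plan is to analyze the cycle structures of $\alpha$ and $\beta$ under the assumption that they commute. First I would recall the standard fact about centralizers in $S_n$: if $\alpha,\beta$ commute, then conjugation by $\beta$ permutes the cycles of $\alpha$ among themselves, preserving cycle length; more precisely, for any cycle $\gamma = (x_1\,x_2\,\cdots\,x_k)$ appearing in $\alpha$, the conjugate $\beta\gamma\beta^{-1} = (\beta x_1\,\beta x_2\,\cdots\,\beta x_k)$ is again a cycle of $\alpha$ (of the same length). So $\beta$ induces a permutation on the set of $l^e$-cycles of $\alpha$. In particular, $\beta$ maps the support of $\alpha_1$ (an $l^e$-element subset of $\{1,\dots,n\}$) either to itself or to the support of a \emph{different} $l^e$-cycle of $\alpha$.

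The key observation is a counting constraint: since $l^e \le n < l^{e+1}$, there is room for at most $l$ disjoint $l^e$-cycles, and in fact the number of integers moved by all the $l^e$-cycles of $\alpha$ together is at most $n < l^{e+1}$. The second step is to consider the orbit of the support of $\alpha_1$ under the cyclic group $\langle\beta\rangle$. If this orbit has size $\ge 2$, then because $\beta \in S_n[l^\infty]$ has $l$-power order, the orbit size is a power of $l$ that is at least $l$, so the supports of at least $l$ distinct $l^e$-cycles of $\alpha$ would be permuted cyclically by $\beta$ — but that already accounts for $\ge l\cdot l^e = l^{e+1} > n$ points, a contradiction. Hence $\beta$ must fix the support of $\alpha_1$ setwise; symmetrically, $\alpha$ fixes the support of $\beta_1$ setwise. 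If these supports are disjoint we are in the first case of the lemma, so assume they intersect.

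The final step handles the case where $\alpha_1$ and $\beta_1$ have a common point in their supports. Since $\beta$ fixes $\mathrm{supp}(\alpha_1)$ setwise and $\alpha$ fixes $\mathrm{supp}(\beta_1)$ setwise, and both are single cycles of length $l^e$, I would argue that $\mathrm{supp}(\alpha_1) = \mathrm{supp}(\beta_1)$: indeed $\beta$ restricted to $\mathrm{supp}(\alpha_1)$ lies in the cyclic group generated by $\alpha_1$ (the centralizer of a single $l^e$-cycle within the symmetric group on its support is exactly the cyclic group it generates), so $\beta$ acts on $\mathrm{supp}(\alpha_1)$ as a power of $\alpha_1$; in particular every point of $\mathrm{supp}(\alpha_1)$ that is moved by $\beta$ stays inside $\mathrm{supp}(\alpha_1)$, and since the supports meet, $\beta$ actually moves points of $\mathrm{supp}(\alpha_1)$, forcing $\mathrm{supp}(\beta_1)\subseteq\mathrm{supp}(\alpha_1)$ and hence equality by the cardinality count. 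Once the supports coincide, $\beta_1$ is a power of $\alpha_1$ (again using that $\beta_1$, acting on the common support, lies in $\langle\alpha_1\rangle$), and in fact a generating power since it is an $l^e$-cycle; symmetrically $\alpha_1$ is a power of $\beta_1$. I expect the main obstacle to be bookkeeping the orbit-counting argument cleanly — making sure the $l$-power order of $\beta$ is used correctly to force the orbit of $\mathrm{supp}(\alpha_1)$ to be trivial given the bound $n < l^{e+1}$ — rather than any deep structural difficulty.
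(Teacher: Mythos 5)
Your proposal is correct and uses the same central idea as the paper: the orbit-counting argument that an element of $l$-power order acting on the set of $l^e$-cycles (or their supports) must have trivial orbits because a nontrivial orbit would involve at least $l\cdot l^e = l^{e+1} > n$ points. You simply run the symmetric version ($\langle\beta\rangle$ acting on supports of $\alpha$'s cycles rather than $\alpha$ acting on $\beta$'s cycles) and spell out the final "commuting cycles of the same length with overlapping support are powers of one another" step, which the paper leaves as a one-line remark.
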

\begin{proof}
Since $\beta^{\alpha}=\beta$, $\alpha$ acts on the $l^{e}$-cycles
of $\beta$. Since $\alpha$ is of $l$-power order, all the orbits
of this action are of $l$-power size, but $l\cdot l^{e}>n$, hence
$\alpha$ fixes each $l^{e}$-cycle of $\beta$. In particular, $\alpha_{1}$
and $\beta_{1}$ commute. The result follows. \end{proof}
\begin{thm}
\label{thm:Sn}For all $n$, conjecture \ref{conjecture} holds for
$S_{n}$. \end{thm}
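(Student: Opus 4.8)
The plan is to work one prime at a time. Since $\widehat{S_{n}}=\prod_{l}\widehat{S_{n}}[l^{\infty}]$ and the $l$-th factor is an $l$-group, it is enough to prove, for each prime $l\le n$, that $l^{v}$ divides $|\widehat{S_{n}}|$, where $v=v_{l}(n!)=\sum_{i\ge1}\lfloor n/l^{i}\rfloor$ is the $l$-adic valuation of $n!$; multiplying over all $l$ and using Legendre's formula then gives $n!\mid|\widehat{S_{n}}|$. By the Remark after Lemma \ref{lem:(1)-A-function} it suffices to produce $v$ functions $S_{n}[l^{\infty}]\to\mathbb{F}_{l}^{+}$ that are additive on commuting pairs and linearly independent over $\mathbb{F}_{l}$.

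These functions will all come from the cycles of maximal $l$-power length, as the introduction suggests. Fix $e$ with $l^{e}\le n<l^{e+1}$. For each cyclic subgroup $C\le S_{n}$ generated by an $l^{e}$-cycle, choose a generator $\gamma_{C}$, let $B_{C}$ be its support, and let $\chi_{C}\colon C\to\mathbb{F}_{l}$ be the homomorphism with $\chi_{C}(\gamma_{C})=1$; note that $\chi_{C}$ vanishes on every element of $C$ that is not itself an $l^{e}$-cycle. Define $f_{C}\colon S_{n}[l^{\infty}]\to\mathbb{F}_{l}^{+}$ by $f_{C}(\alpha)=\chi_{C}(\alpha|_{B_{C}})$ if $\alpha(B_{C})=B_{C}$ and $\alpha|_{B_{C}}\in C$, and $f_{C}(\alpha)=0$ otherwise. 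Linear independence is then immediate: evaluating at the element $\gamma_{C'}\in S_{n}$ gives $f_{C}(\gamma_{C'})=\delta_{C,C'}$.

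The main step, and the one I expect to be the real obstacle, is to check that each $f_{C}$ is a semicharacter; equivalently, that extending the homomorphism $\chi_{C}\circ(\sigma\mapsto\sigma|_{B_{C}})$ from the subgroup $D=\{\sigma\in S_{n}:\sigma(B_{C})=B_{C},\ \sigma|_{B_{C}}\in C\}$ by zero to all of $S_{n}[l^{\infty}]$ stays additive on commuting pairs — something false for a general subgroup. The key input is an extension of the proof of Lemma \ref{lem:If--commute,}: if $\alpha\in S_{n}[l^{\infty}]$ has an $l^{e}$-cycle on a set $B$ and $\beta\in S_{n}[l^{\infty}]$ commutes with $\alpha$, then $\beta(B)=B$ and $\beta|_{B}$ lies in the cyclic group that cycle generates (because $\alpha$ has fewer than $l$ cycles of length $l^{e}$, so the $l$-element $\beta$ permutes them with trivial orbits, and the centralizer of an $l^{e}$-cycle in $\mathrm{Sym}(B)\cong S_{l^{e}}$ is the cyclic group it generates). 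Granting this, I would split into two cases. If $\alpha,\beta\in D$, then $\alpha\beta\in D$ and additivity of $f_{C}$ is clear. Otherwise, say $\alpha\notin D$; since $\beta$ and $\alpha\beta$ each commute with $\alpha$, the key input shows $f_{C}(\beta)=0$ and $f_{C}(\alpha\beta)=0$ (either nonvanishing would place a genuine $l^{e}$-cycle on $B_{C}$ into an element commuting with $\alpha$, forcing $\alpha\in D$), so $f_{C}(\alpha\beta)=f_{C}(\alpha)=f_{C}(\beta)=0$ and additivity again holds; the only delicate point here is that $\chi_{C}$ kills the non-generators of $C$. Once this is established, Lemma \ref{lem:(1)-A-function} upgrades $f_{C}$ to an element of $\widehat{S_{n}}$.

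It remains to count. The number of cyclic subgroups of $S_{n}$ generated by an $l^{e}$-cycle is $N=\binom{n}{l^{e}}\cdot\frac{(l^{e}-1)!}{l^{e-1}(l-1)}$ (choose the support, then a cyclic subgroup of $l^{e}$-cycles on it), so I would finish by checking $N\ge v$. For $n=l^{e}$ this is the elementary inequality $(l^{e}-2)!\ge l^{e-1}$; for $n>l^{e}$ one has $\frac{(l^{e}-1)!}{l^{e-1}(l-1)}\ge1$ while $\binom{n}{l^{e}}\ge n>v$, using unimodality of the binomial coefficients together with $v<n/(l-1)$. Therefore $l^{v}\mid l^{N}\mid|\widehat{S_{n}}|$ for every prime $l\le n$, and hence $n!\mid|\widehat{S_{n}}|$, which is Theorem \ref{thm:Sn}.
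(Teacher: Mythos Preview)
Your proposal is correct and follows essentially the same approach as the paper: for each prime $l\le n$ you build $\mathbb{F}_l$-valued semicharacters out of the $l^e$-cycles (your ``key input'' is exactly the content and proof of Lemma~\ref{lem:If--commute,}), then count and compare with $v_l(n!)$ via Legendre's formula. Your functions $f_C$, indexed by cyclic subgroups generated by $l^e$-cycles, form a basis of the paper's space $V$ (your $f_C$ equals the paper's $\tilde f$ for the $f\in V$ supported on $C$), and your extension-by-zero verification is a more detailed version of the paper's one-line appeal to Lemma~\ref{lem:If--commute,}; the final inequality $N\ge v_l(n!)$ is checked the same way in both.
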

\begin{proof}
For any prime $l\leq n$, let $A$ be the set of $l^{e}$-cycles of
$S_{n}$, and define $a\sim b$ if $b$ is a power of $a$. This is
an equivalence relation, and we have $|A/\sim|=\left(\begin{array}{c}
n\\
l^{e}
\end{array}\right)\frac{(l^{e}-1)!}{l^{e}(1-\frac{1}{l})}$. Let $V$ be the $\mathbb{F}_{l}$-vector space of functions $f:A\rightarrow\mathbb{F}_{l}^{+}$
satisfying $f(\pi^{i})=i\cdot f(\pi)$ if $(i,l)=1$. Then $dim(V)=|A/\sim|.$
For any $f\in V$, we extend $f$ to a function $\tilde{f}:S_{n}[l^{\infty}]\rightarrow\mathbb{F}_{l}^{+}$
by $\tilde{f}(\pi)=\sum_{c\in c(\pi,l^{e})}f(c)$, where $c(\pi,k)$
is the set of $k$-cycles in the cycle decomposition of $\pi$. By
lemma \ref{lem:If--commute,}, $\tilde{f}$ is in $\widehat{S_{n}[l^{\infty}]}[l]$,
and by lemma \ref{lem:(1)-A-function} it extends to an element of
$\widehat{S_{n}}[l]$. Thus,

\[
\dim_{F_{l}}\widehat{S_{n}}[l]\geq\left(\begin{array}{c}
n\\
l^{e}
\end{array}\right)\frac{(l^{e}-1)!}{l^{e}-l^{e-1}}.
\]

It remains to prove that the right hand side is not less than the
multiplicity of $l$ in $n!$. Indeed, by a theorem due to Legendre,

\[
val_{l}(n!)=\sum_{i\geq1}\lfloor\frac{n}{l^{i}}\rfloor<\frac{n}{l-1}.
\]

If $n\neq l^{e}$ then $\dim_{F_{l}}\widehat{S_{n}}[l]\geq n\geq\frac{n}{l-1}>val_{l}(n!)$.

If $n=l^{e}$ and $n\geq6$, then using the inequality $(n-1)!\geq n^{2}$
we get $\dim_{F_{l}}\widehat{S_{n}}[l]\geq\frac{(n-1)!}{n(1-\frac{1}{l})}\geq\frac{n}{l-1}$.

For $l^{e}=n\leq5$, $\frac{(n-1)!}{n(1-\frac{1}{l})}\geq val_{l}(n!)$
holds as well.\end{proof}
\begin{rem}
On cycles of smaller $l$-power order, we are not always free to define
the image of a semicharacter. Consider for example $2$-cycles. Any
semicharacter takes the values $\pm1$ on any $2$-cycle. The identity
$(12)(34)\cdot(13)(24)=(14)(23)$ holds in the Klein $4$-group which
is commutative. Hence for any semicharacter $f$ of $S_{n}$, we have
$\prod_{1\leq i<j\leq4}f((ij))=1$. Moreover, we can replace the indices
$1,2,3,4$ with $t_{1}<t_{2}<t_{3}<t_{4}$, and get a linear relation
modulo $2$ between $f((t_{i}t_{j})),1\leq i<j\leq4$. Thus, the numbers
$f((ij))$ for $1\leq i<j\leq n$ satisfy a system of $\binom{n}{4}$
equations in $\binom{n}{2}$ variables. For $n=7$ it can be checked
that the only solutions are $f((ij))\equiv1$ and $f((ij))\equiv-1$.
Thus, the same holds for any $n\geq7$. This also shows that a character
on an abelian subgroup $H$ of a group $G$ may not always be extended
to a semicharacter of $G$.
\end{rem}
Let us now consider the alternating groups. The embedding $A_{n}\rightarrow S_{n}$
induces a natural restriction map $R:\widehat{S_{n}}\rightarrow\widehat{A_{n}}$.
Obviously, the sign map $sgn:S_{n}\rightarrow\{\pm1\}$ is in the
kernel of this map.
\begin{lem}
\label{thm:Ker R}(1) $\ker R\subseteq\widehat{S_{n}}[2]$

(2) If $n>1$ is not of the form $2^{k}$or $2^{k}+1$, then $\ker R=\{1,sgn\}$.\end{lem}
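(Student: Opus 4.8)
The plan is to handle the two parts separately; part~(1) is essentially a one-line observation, and part~(2) reduces to a short analysis of $2$-power cycles.

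For part~(1), the key point is that $g^{2}\in A_{n}$ for \emph{every} $g\in S_{n}$, since $sgn(g^{2})=sgn(g)^{2}=1$. So if $f\in\ker R$, meaning $f\in\widehat{S_{n}}$ with $f|_{A_{n}}\equiv1$, then applying the semicharacter condition to the commuting pair $(g,g)$ gives $f(g)^{2}=f(g^{2})=1$. Hence every $f\in\ker R$ is $\{\pm1\}$-valued, i.e.\ $f^{2}=1$ in $\widehat{S_{n}}$, which is exactly the assertion $\ker R\subseteq\widehat{S_{n}}[2]$. No hypothesis on $n$ is used here.

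For part~(2), write $2^{e}\le n<2^{e+1}$; the hypothesis that $n$ is neither $2^{k}$ nor $2^{k}+1$ is equivalent to $n\ge2^{e}+2$, and in particular forces $n\ge6$. Combining part~(1) with the remark following Lemma~\ref{lem:(1)-A-function}, I would identify $\ker R$ with the set of functions $f\colon S_{n}[2^{\infty}]\to\mathbb{F}_{2}^{+}$ that are additive on commuting pairs and vanish on $A_{n}\cap S_{n}[2^{\infty}]$; under this identification $sgn$ corresponds to the function sending $g$ to the number of its nontrivial cycles, reduced mod~$2$. Since the cycles of an element of $S_{n}[2^{\infty}]$ commute pairwise, $f$ is determined by its values on $2$-power cycles, so it suffices to show $f$ is constant on these. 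First, $f$ is constant on transpositions: two disjoint transpositions commute and their product lies in $A_{n}\cap S_{n}[2^{\infty}]$, so $f$ takes equal values on them, and for $n\ge5$ any two transpositions are joined by a path of transpositions whose consecutive terms are disjoint; call the common value $\varepsilon$. Next, let $c$ be a nontrivial $2^{k}$-cycle (so $1\le k\le e$, whence $2^{k}\le2^{e}\le n-2$), and choose a transposition $\tau$ disjoint from the support of $c$. Then $c$ and $\tau$ commute, $c\tau\in A_{n}\cap S_{n}[2^{\infty}]$ (it is a product of two commuting odd permutations of $2$-power order), and so $0=f(c\tau)=f(c)+f(\tau)=f(c)+\varepsilon$, giving $f(c)=\varepsilon$. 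Summing over the cycle decomposition, $f(g)=\varepsilon\cdot(\text{number of nontrivial cycles of }g)$ for every $g$, so $f$ is trivial when $\varepsilon=0$ and equals $sgn$ when $\varepsilon=1$. Hence $\ker R\subseteq\{1,sgn\}$, and since $sgn\in\ker R$ with $sgn\ne1$ for $n>1$, equality holds.

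The only place the hypothesis on $n$ enters — and the one real obstacle — is producing a transposition disjoint from a \emph{maximal} $2^{e}$-cycle, which requires precisely $n\ge2^{e}+2$. This is essential rather than technical: when $n=2^{e}$ or $2^{e}+1$, the $2^{e}$-cycles are exactly the cycles on which a semicharacter is unconstrained in the proof of Theorem~\ref{thm:Sn}, so one should expect $\ker R$ to be strictly larger for those $n$. Everything else — constancy on transpositions, the sign bookkeeping for products of cycles, and the passage to $S_{n}[2^{\infty}]$ — is routine.
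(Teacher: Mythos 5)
Your proof is correct and follows essentially the same route as the paper. Part~(1) is the identical one-line observation. For part~(2), the paper works multiplicatively (normalizing by $sgn$ so that $\phi((ij))=1$, then killing each $2^{r}$-cycle $c$ via a disjoint transposition and the identity $\phi(c(ij))=1$), while you work additively in $\mathbb{F}_{2}$ on $S_{n}[2^{\infty}]$ after the reduction of Lemma~\ref{lem:(1)-A-function}; these are the same argument in different bookkeeping. Your treatment is somewhat more explicit: you spell out the path-of-disjoint-transpositions step (which the paper dismisses with ``it is easy to see''), record that $2^{e}\le n-2$ is exactly what supplies a disjoint transposition for a maximal cycle, and identify $sgn$ with the mod-$2$ count of nontrivial $2$-power cycles so the conclusion $f\in\{0,\,sgn\}$ falls out directly instead of via normalization. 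None of this is a genuinely different method, just a cleaner write-up of the same idea.
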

\begin{proof}
(1) If $\phi\in\ker R$, then for $g\in S_{n}$, $\phi(g)^{2}=\phi(g^{2})=1$.

(2) By the assumption, $n\geq6$. Let $\phi\in\ker R$, then for any
$i\neq j$ we have $\phi((ij))=\pm1$. Moreover, for distinct $i,j,k,l$
we have $\phi((ij))\phi((kl))=\phi((ij)(kl))=1,$ hence $\phi((ij))=\phi((kl))$.
From here it is easy to see that $\phi$ is constant on all the involutions
of the form $(ij)$. Multiplying if necessary by $sgn$, we may assume
that $\phi((ij))=1$ for all $i\neq j$. For every $r$ such that
$2^{r}\leq n$, and any $2^{r}-cycle$ $c\in S_{n}$, we may find,
by the assumption on $n$, $i$ and $j$ such that $(ij)$ is disjoint
from $c$, hence $\phi(c(ij))=1\Rightarrow\phi(c)=1$. Hence, $\phi$
is equal to $1$ on $S_{n}[2^{\infty}]$, and by lemma \ref{lem:(1)-A-function},
$\phi=1$.\end{proof}
\begin{thm}
\label{thm:An}Conjecture \ref{conjecture} holds for $A_{n}$. \end{thm}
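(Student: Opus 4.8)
The plan is to deduce the result for $A_n$ from the already-established Theorem~\ref{thm:Sn} for $S_n$, using the restriction map $R:\widehat{S_n}\to\widehat{A_n}$ together with Lemma~\ref{thm:Ker R}. The basic counting identity is $|\widehat{A_n}|\geq |\operatorname{im}R| = |\widehat{S_n}|/|\ker R|$. Since $|A_n| = |S_n|/2$ and $|S_n|$ divides $|\widehat{S_n}|$, it suffices to show that $|\ker R|$ divides $|\widehat{S_n}|/|A_n| = 2\cdot(|\widehat{S_n}|/|S_n|)$, i.e.\ essentially that $|\ker R|$ is at most $2$ times the $S_n$-surplus; the cleanest sufficient condition is simply $|\ker R| = 2$, which would give $|A_n|\mid|\widehat{A_n}|$ immediately. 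By part~(2) of Lemma~\ref{thm:Ker R}, this holds whenever $n$ is not of the form $2^k$ or $2^k+1$, so those cases are done at once. It remains to handle $n = 2^k$ and $n = 2^k+1$, and also the small cases $n\leq 5$ (where the hypothesis of Lemma~\ref{thm:Ker R}(2) may fail); the small cases can be checked directly or by the computer results of Section~\ref{sec:Numerical-evidence}.

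For the exceptional values $n=2^k$ and $n=2^k+1$ (with $k\geq 2$, say, so $n\geq 4$), I would argue that even though $\ker R$ may be larger than $\{1,\operatorname{sgn}\}$, the $2$-part of $|\widehat{S_n}|$ is large enough to absorb it. Concretely, by Lemma~\ref{thm:Ker R}(1), $\ker R\subseteq\widehat{S_n}[2]$, so $|\ker R|$ is a power of $2$, and we only need to compare $2$-adic valuations: it suffices to show
\[
\operatorname{val}_2\!\big(|\widehat{S_n}|\big) - \operatorname{val}_2(|\ker R|) \;\geq\; \operatorname{val}_2(|A_n|) \;=\; \operatorname{val}_2(n!) - 1.
\]
The proof of Theorem~\ref{thm:Sn} already gives a strong lower bound on $\dim_{\mathbb{F}_2}\widehat{S_n}[2]$, namely $\binom{n}{2^e}\frac{(2^e-1)!}{2^{e}-2^{e-1}}$ where $2^e\leq n<2^{e+1}$; when $n=2^k$ or $2^k+1$ this is enormous compared to $\operatorname{val}_2(n!)<n$. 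So one needs an upper bound on $\operatorname{val}_2(|\ker R|)$, i.e.\ on $\dim_{\mathbb{F}_2}\ker R$. Following the reasoning in the proof of Lemma~\ref{thm:Ker R}(2): an element $\phi\in\ker R$ is $\pm1$ on transpositions and constant on them (the Klein-four relations force this for $n\geq 4$), so up to multiplying by $\operatorname{sgn}$ we may take $\phi\equiv 1$ on transpositions; then $\phi$ is determined on each $2^r$-cycle by at most the "defect" coming from $2^r$-cycles that cannot be disjointly completed by a transposition — which only happens when $2^r\in\{n,n-1\}$. This bounds $\dim_{\mathbb{F}_2}\ker R$ by $1$ plus the number of $\sim$-classes of such maximal cycles, a quantity far smaller than the lower bound for $\dim_{\mathbb{F}_2}\widehat{S_n}[2]$.

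The main obstacle, then, is the bookkeeping in the exceptional cases $n=2^k, 2^k+1$: one must pin down $\dim_{\mathbb{F}_2}\ker R$ precisely enough (or bound it crudely enough) and verify the inequality $\dim_{\mathbb{F}_2}\widehat{S_n}[2] - \dim_{\mathbb{F}_2}\ker R \geq \operatorname{val}_2(n!)-1$ uniformly. I expect the crude bound $\dim_{\mathbb{F}_2}\ker R \leq 1 + \binom{n}{2^{e}}(\text{something polynomial})$ to be wasteful but sufficient, since the surviving free parameters in $\widehat{S_n}[2]$ grow factorially in $2^e\sim n$ while $\operatorname{val}_2(n!)$ grows only linearly; the genuinely small $n$ (say $n\leq 8$ or so) are then dispatched by direct inspection. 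An alternative, possibly slicker, route is to observe that $\operatorname{im}R$ already contains the image of the factorially-many free semicharacters constructed in Theorem~\ref{thm:Sn} restricted to $A_n$ — one can track the kernel of that restriction on the model space $V$ directly and show it is small — but the argument via $|\widehat{S_n}|/|\ker R|$ is the more economical path to write down.
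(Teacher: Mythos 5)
Your handling of the non-exceptional case is correct and in fact slightly slicker than the paper's: since $\operatorname{im}R\leq\widehat{A_n}$ with $|\operatorname{im}R|=|\widehat{S_n}|/|\ker R|=|\widehat{S_n}|/2$ and $|S_n|\mid|\widehat{S_n}|$, you get $|A_n|\mid|\operatorname{im}R|\mid|\widehat{A_n}|$ in one stroke, without separating primes. The paper instead argues $l$-locally, handling $l>2$ via $A_n[l^\infty]=S_n[l^\infty]$ and Lemma \ref{lem:(1)-A-function}, and $l=2$ via Lemma \ref{thm:Ker R}; your version gives the same conclusion more economically when $\ker R=\{1,\operatorname{sgn}\}$.

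However, your treatment of the exceptional cases $n=2^k$ and $n=2^k+1$ has a genuine gap, and it is not a bookkeeping issue. Your plan is to show $\dim_{\mathbb{F}_2}\ker R$ is small compared to the lower bound $\binom{n}{2^e}\frac{(2^e-1)!}{2^e-2^{e-1}}$ on $\dim_{\mathbb{F}_2}\widehat{S_n}[2]$ produced in the proof of Theorem \ref{thm:Sn}. But for $n=2^k$ or $n=2^k+1$ one has $e=k$, and every semicharacter built in that proof is supported on $2^k$-cycles. A $2^k$-cycle is an odd permutation, and since $n\leq 2^k+1$ any element of $S_n[2^\infty]$ containing a $2^k$-cycle \emph{is} a $2^k$-cycle (times at most one fixed point), hence is odd. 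Consequently every such constructed semicharacter vanishes identically on $A_n[2^\infty]$, i.e.\ the entire subspace whose existence Theorem \ref{thm:Sn} establishes sits inside $\ker R$. Your inequality $\operatorname{val}_2(|\widehat{S_n}|)-\operatorname{val}_2(|\ker R|)\geq\operatorname{val}_2(|A_n|)$ is therefore not supported by any bound you actually have: the only known lower bound on the left-hand side's minuend is entirely absorbed by the subtrahend. The same problem kills your proposed "alternative route": the image of the Theorem \ref{thm:Sn} model space $V$ under restriction to $A_n$ is not merely small, it is trivial. What is needed, and what the paper supplies, is a \emph{new} construction for these $n$: semicharacters of $A_n[2^\infty]$ defined by summing a function over the $m/4$-cycles of $\pi^2$ (with $m=2^k$), together with a nontrivial argument that these functions are additive on commuting pairs. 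This step is the real content of the exceptional cases and cannot be deduced from Theorem \ref{thm:Sn} alone. Also note that Lemma \ref{thm:Ker R}(2) is stated under the hypothesis $n\geq6$ (equivalently, $n>1$ not of the form $2^k$ or $2^k+1$), so your claim that "the Klein-four relations force this for $n\geq4$" needs care: $n=4,5$ must be handled by hand, as the paper does.
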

\begin{proof}
For a prime $l>2$, we have $A_{n}[l^{\infty}]=S_{n}[l^{\infty}],$
hence by lemma \ref{lem:(1)-A-function},
\[
|\widehat{A_{n}}[l^{\infty}]|=|\widehat{A_{n}[l^{\infty}]}|=|\widehat{S_{n}[l^{\infty}]}|=|\widehat{S_{n}}[l^{\infty}]|.
\]
By theorem \ref{thm:Sn} we conclude that $val_{l}(|\widehat{A_{n}}|)=val_{l}(|\widehat{S_{n}}|)\geq val_{l}(|S_{n}|)=val_{l}(|A_{n}|).$
Thus, it remains to prove that $val_{2}(|\widehat{A_{n}}|)\geq val_{2}(|A_{n}|).$
We consider the following cases:
\begin{itemize}
\item If $n$ is not of the form $2^{k}$ or $2^{k}+1$, by lemma \ref{thm:Ker R}
we have $|\ker R|=2$, hence $val_{2}(|\widehat{A_{n}}|)=val_{2}(|\widehat{S_{n}}|)-1\geq val_{2}(|S_{n}|)-1=val_{2}(|A_{n}|)$. 
\item If $n\leq3$, $A_{n}$ is abelian.
\item If $n=4$, $A_{n}[2^{\infty}]$ is an abelian group of size 4 (the
Klein $4$-group), hence $\widehat{A_{n}[2^{\infty}]}$ has $4$ elements.
\item If $n=5$, we take the natural embedding $A_{4}\hookrightarrow A_{5}$
(where the permutations in the image fix the number $5$). Then, one
may extend the elements of $\widehat{A_{4}[2^{\infty}]}$ constructed
above to $\widehat{A_{5}[2^{\infty}]}$ by setting the functions to
be $1$ outside of $\widehat{A_{4}[2^{\infty}]}$. This shows that
$val_{2}(|\widehat{A_{5}}|)\geq2=val_{2}(|A_{5}|)$.
\item If $n=2^{k}+\epsilon$, where $\epsilon\in\{0,1\}$ and $k\geq3$,
let $m=2^{k}$. We will construct elements of $\widehat{A_{n}}[2]$
by a small variation on the proof of Theorem \ref{thm:Sn}: Let $A$
be the set of $m/4$-cycles in $S_{n}$, with the equivalence relation
defined by $\pi\sim\pi^{i}$ for $\pi\in A$ and odd $i$, and let
$V$ be the $\mathbb{F}_{2}$-vector space of functions $f:A\rightarrow\mathbb{F}_{2}^{+}$
satisfying $f(\pi^{i})=i\cdot f(\pi)$ for odd $i$. We have $dim_{\mathbb{F}_{2}}(V)=|A/\sim|.$
For any $f\in V$, we extend $f$ to a function $\tilde{f}:A_{n}[2^{\infty}]\rightarrow\mathbb{F}_{2}^{+}$
by $\tilde{f}(\pi)=\sum_{c\in c(\pi^{2},m/4)}f(c)$, where $c(-,-)$
is as in the proof of Theorem \ref{thm:Sn}. We claim that $\tilde{f}\in\widehat{A_{n}[2^{\infty}]}[2]$.
Indeed, let us assume that $\pi_{1},\pi_{2}\in A_{n}[2^{\infty}]$
commute, and let $\pi_{3}=\pi_{1}\pi_{2}$. Since $\pi_{i}\in A_{n}$,
no $\pi_{i}$ is an $m$-cycle. If no $\pi_{i}$ contains an $m/2$-cycle,
then no $\pi_{i}^{2}$ contains an $m/4$-cycle, hence $\tilde{f}(\pi_{i})=0$
for all $i$, and $\tilde{f}(\pi_{3})=\tilde{f}(\pi_{1})+\tilde{f}(\pi_{2})$.
If $\pi_{1}$ contains an $m/2$-cycle $\sigma_{1}$, then $\pi_{2}$
(acting by conjugation) either fixes $\sigma_{1}$ or moves it to
an additional $m/2$-cycle of $\pi_{1}$. Hence, $\pi_{2}^{2}$ fixes
$\sigma_{1}$. Consequently, there exists $k$ such that $\pi_{2}^{2}$
is equal to $\sigma_{1}^{k}$ on the support of $\sigma_{1}$. Since
$\pi_{2}$ is not an $m$-cycle, $k$ must be even. We conclude that
$\pi_{2}^{2}$ is a power of $\pi_{1}^{2}$ on the support of any
$m/4$-cycle of $\pi_{1}^{2}$, and similarly that $\pi_{i}^{2}$
is a power of $\pi_{j}^{2}$ on the support of any $m/4$-cycle of
$\pi_{j}$, for all $i,j\in\{1,2,3\}$. Hence, there exist disjoint
cycles $c_{1},\dotsc,c_{u}$ and integers $a_{ij}$ such that $\pi_{i}|_{supp(c_{j})}=c_{j}^{a_{ij}}$,
and any $m/4$-cycle of any $\pi_{i}$ is supported on $supp(c_{j})$
for some $j$. From here, it follows that $\tilde{f}(\pi_{3})=\tilde{f}(\pi_{1})+\tilde{f}(\pi_{2})$.
We conclude, using lemma \ref{lem:(1)-A-function}, that $\dim_{F_{2}}\widehat{A_{n}}[2]\geq\dim_{\mathbb{F}_{2}}V=\binom{n}{m/4}\frac{(m/4-1)!}{(m/4-m/8)}\geq n\geq val_{2}(|A_{n}|)$.
\end{itemize}
\end{proof}

\section{Unitriangular Groups}

For groups of upper unitriangular matrices (i.e unipotent upper triangular
matrices), we will construct semicharacters by a discrete version
of the $\log$ function.
\begin{defn}
For all $n$ and prime $p$, let $w_{n,p}$ be the polynomial 
\[
w_{n,p}(x)=\sum_{i=1}^{n-1}(-1)^{i+1}\frac{p^{e}}{i}x^{i}\in\mathbb{Z}_{(p)}[x]
\]

where $e$ is the number satisfying $p^{e}\leq n-1<p^{e+1}$.
\end{defn}
Note that modulo $p$, $w_{n,p}(x)\equiv\sum_{ip^{e}<n}(-1)^{i+1}\frac{x^{ip^{e}}}{i}$. 
\begin{lem}
$Let$ $p$ be prime, \label{logmult} and let $A$ be a nilpotent
$\mathbb{F}_{p}$-algebra of nilpotence degree $n$.
\begin{enumerate}
\item If $x,y\in A$ commute then 
\[
w_{n,p}(x+y+xy)=w_{n,p}(x)+w_{n,p}(y).
\]

\item If $p\geq n$ then $w_{n,p}$ defines a bijection from $A$ to itself.
\end{enumerate}
\end{lem}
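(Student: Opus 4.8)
The plan is to treat $w_{n,p}$ as a truncation of the formal logarithm $\log(1+t)=\sum_{i\ge 1}(-1)^{i+1}t^i/i$ and to exploit the identity $\log((1+t)(1+s))=\log(1+t)+\log(1+s)$. More precisely, I would introduce the auxiliary polynomial $L(x)=\sum_{i\ge 1}(-1)^{i+1}x^i/i$, working in $\mathbb{Q}[x]$ or $\mathbb{Z}_{(p)}[x]$; since $A$ has nilpotence degree $n$, any element of $A$ satisfies $x^n=0$, so only the terms with $i<n$ survive when $L$ is applied to an element of $A$. Thus on $A$ the polynomial $\sum_{i=1}^{n-1}(-1)^{i+1}x^i/i$ already behaves like $\log(1+x)$; the factor $p^e$ in $w_{n,p}$ is there only to clear denominators (for $i<n\le p^{e+1}$ one has $\mathrm{val}_p(i)\le e$, so $p^e/i\in\mathbb{Z}_{(p)}$), so $w_{n,p}$ is a genuine $\mathbb{Z}_{(p)}$-polynomial, and on $A$ it equals $p^e L(x)$ in the sense that $w_{n,p}(a)=p^e\cdot\big(\sum_{i=1}^{n-1}(-1)^{i+1}a^i/i\big)$, the latter sum being a well-defined element of $A$ because $A$ is an $\mathbb{F}_p$-algebra and the coefficients lie in $\mathbb{Z}_{(p)}$.

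For part (1), I would first establish the exponential/logarithm formalism for commuting nilpotents: define $E(x)=\sum_{i\ge 0}x^i/i!$ — but since $p<n$ may fail, $i!$ need not be invertible mod $p$, so instead of $E$ I would argue directly with the formal power series identity. The cleanest route: in the polynomial ring $\mathbb{Q}[X,Y]$ one has the formal identity $L\big((1+X)(1+Y)-1\big)=L(X)+L(Y)$, i.e. $L(X+Y+XY)=L(X)+L(Y)$ as formal power series; truncating, the two sides agree modulo the ideal generated by monomials of total degree $\ge n$ only after we also discard cross terms, so a little care is needed. The correct statement is that $L(X+Y+XY)-L(X)-L(Y)$, as an element of $\mathbb{Q}[[X,Y]]$, is identically zero; hence substituting commuting elements $x,y$ of the nilpotent algebra $A$ (where all monomials of degree $\ge n$ vanish, and $xy=yx$ so the substitution is a ring homomorphism from $\mathbb{Q}_{(p)}[[X,Y]]$-truncation) gives $\sum_{i<n}(-1)^{i+1}(x+y+xy)^i/i=\sum_{i<n}(-1)^{i+1}x^i/i+\sum_{i<n}(-1)^{i+1}y^i/i$ in $A$. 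Multiplying through by $p^e$ yields exactly $w_{n,p}(x+y+xy)=w_{n,p}(x)+w_{n,p}(y)$. The one subtlety is that $p^e/i$ is the coefficient actually appearing in $w_{n,p}$, whereas the identity is most naturally proved with coefficient $1/i$; this is harmless because $p^e$ is a fixed scalar and $A$ is a $\mathbb{Z}_{(p)}$-module, so we may pull it out of every term.

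For part (2), assume $p\ge n$. Then modulo $p$ the remark preceding the lemma gives $w_{n,p}(x)\equiv\sum_{ip^e<n}(-1)^{i+1}x^{ip^e}/i$; but $p\ge n>n-1\ge p^e$ forces $e=0$, so $w_{n,p}(x)\equiv\sum_{i=1}^{n-1}(-1)^{i+1}x^i/i$ with all $i$ invertible mod $p$ (as $i<n\le p$), and in particular the linear coefficient is $1$. So $w_{n,p}$ is a polynomial map $A\to A$ of the form $x\mapsto x+(\text{higher-order terms in }x)$ where "higher-order" means lying in $A^2$. To invert it I would construct the inverse by the same formalism using the truncated exponential $E(x)=\sum_{i=1}^{n-1}x^i/i!$ (now legitimate since $i<n\le p$ makes $i!$ a unit), using the formal inverse relation $E(L(X))=1+X$ and $L(E(X)-1)=X$ in $\mathbb{Q}[[X]]$, truncated and specialized to $A$; this shows the map $x\mapsto E(x)-1$ (composed appropriately with the scalar $p^e=1$) is a two-sided inverse to $w_{n,p}$ on $A$. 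Alternatively, and perhaps more self-containedly, one observes that $x\mapsto x+(\text{terms of order}\ge 2)$ on a nilpotent algebra is automatically bijective: injectivity and surjectivity both follow by induction on the nilpotence degree, solving for successive "layers" $A^k/A^{k+1}$.

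The main obstacle I anticipate is bookkeeping around the factor $p^e$ and the truncation: one must be scrupulous that $w_{n,p}$, defined with coefficients $(-1)^{i+1}p^e/i$, genuinely has $\mathbb{Z}_{(p)}$ coefficients (the inequality $p^e\le n-1<p^{e+1}$ guarantees $\mathrm{val}_p(i)\le e$ for all $1\le i\le n-1$, which is exactly what is needed), and that passing from the characteristic-zero formal identity to the statement in the $\mathbb{F}_p$-algebra $A$ is legitimate — the point being that we never actually divide by $p$, only by the units $i/\gcd(i,p^e)$, so the whole computation makes sense over $\mathbb{Z}_{(p)}$ and then reduces mod $p$. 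Everything else — the formal logarithm identities $L(X+Y+XY)=L(X)+L(Y)$ and $E\circ L=\mathrm{id}$ — is classical and can be quoted or proved in one line by differentiating formal power series.
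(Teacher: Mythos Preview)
Your approach is correct and matches the paper's: both parts use the formal logarithm identity $\log((1+X)(1+Y))=\log(1+X)+\log(1+Y)$ over $\mathbb{Q}$, truncate modulo degree $\ge n$, multiply through by $p^{e}$ to land in $\mathbb{Z}_{(p)}[X,Y]$, and then specialize to commuting elements of the $\mathbb{F}_p$-algebra $A$; for part~(2) the paper likewise observes $e=0$ and uses the truncated exponential $u(x)=\sum_{i=1}^{n-1}x^{i}/i!$ together with the formal identity $\exp(\log(1+x))-1=x$ to exhibit an inverse. Your alternative filtration argument for bijectivity in~(2) is also valid but does not appear in the paper.
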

\begin{proof}
$\,$
\begin{enumerate}
\item We have the formal identity over $\mathbb{Q}$, 
\begin{eqnarray*}
\sum_{i\geq1}(-1)^{i+1}\frac{(x+y+xy)^{i}}{i} & = & \log((1+x)(1+y))=\log(1+x)+\log(1+y)\\
 & = & \sum_{i\geq1}(-1)^{i+1}\left(\frac{x^{i}}{i}+\frac{y^{i}}{i}\right)
\end{eqnarray*}
We multiply it by $p^{e}$, and take the resulting identity modulo
monomials of degree $\geq n$. If $I$ is the ideal in $\mathbb{Q}[x,y]$
generated by these monomials, we get an identity in $\mathbb{Q}[x,y]/I$,
\[
w_{n,p}(xy+x+y)=w_{n,p}(x)+w_{n,p}(y).
\]
Since all the coefficients are in $\mathbb{Z}_{(p)}$, and $A$ is
an $\mathbb{F}_{p}$-algebra, the identity holds for any two commuting
elements in $A$.
\item Since $p\geq n$, we have $e=0$. let $u(x)=\sum_{i=1}^{n-1}\frac{x^{i}}{i!}$.
By the formal identity $exp(log(1+x))-1=x$, we get, by the same argument,
that $u(w(x))=x$ for all $x\in A$.
\end{enumerate}
\end{proof}
\begin{thm}
\label{thm:unipotent}Let $\mathbb{F}$ be a finite field, let $n\leq char(\mathbb{F}),$
and let $G=U(n,\mathbb{F})$ be the group of upper triangular unipotent
matrices. Then, conjecture \ref{conjecture} holds for $G$.\end{thm}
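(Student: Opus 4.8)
The plan is to exhibit enough semicharacters on $G = U(n,\mathbb{F})$ using the discrete logarithm $w = w_{n,p}$ from Lemma \ref{logmult}. Write $q = |\mathbb{F}| = p^{f}$, so $|G| = q^{\binom{n}{2}}$ is a $p$-group, and by the remark following Lemma \ref{lem:(1)-A-function} it suffices to produce $\binom{n}{2}\cdot f$ functions $G \to \mathbb{F}_{p}^{+}$, linearly independent over $\mathbb{F}_{p}$, each satisfying $\phi(AB) = \phi(A) + \phi(B)$ whenever $AB = BA$. Let $A = \mathbb{F}\langle N \rangle$ be the (non-unital) $\mathbb{F}$-algebra of strictly upper triangular $n\times n$ matrices; it is nilpotent of nilpotence degree $n$, and $g \mapsto g - I$ is a bijection $G \to A$. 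Since $n \le \mathrm{char}(\mathbb{F}) = p$, part (2) of Lemma \ref{logmult} applies: $w$ is a bijection from $A$ to itself. Crucially, part (1) says that if $x = g-I$ and $y = h-I$ commute (equivalently $gh=hg$), then $gh - I = x + y + xy$, so $w(gh-I) = w(g-I) + w(h-I)$; that is, $L(g) := w(g-I)$ is an additive-valued semicharacter $G \to (A,+)$.

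Next I would postcompose $L$ with $\mathbb{F}$-linear functionals to land in $\mathbb{F}_{p}^{+}$. For each pair $i < j$ let $\lambda_{ij} : A \to \mathbb{F}$ be the $(i,j)$-entry coordinate, and let $\{ \tau_{1}, \dots, \tau_{f} \}$ be an $\mathbb{F}_{p}$-basis of $\mathrm{Hom}_{\mathbb{F}_{p}}(\mathbb{F}, \mathbb{F}_{p})$ (for instance $\tau_{k} = \mathrm{Tr}_{\mathbb{F}/\mathbb{F}_{p}}(\alpha_{k}\,\cdot\,)$ for an $\mathbb{F}_{p}$-basis $\alpha_{1},\dots,\alpha_{f}$ of $\mathbb{F}$). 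Then the $\binom{n}{2} f$ functions $\phi_{ij,k} := \tau_{k} \circ \lambda_{ij} \circ L : G \to \mathbb{F}_{p}^{+}$ are each semicharacters in the additive sense, hence by the remark after Lemma \ref{lem:(1)-A-function} they extend to elements of $\widehat{G}[p]$ (in fact $G$ is a $p$-group so no extension is needed — they are already in $\widehat{G}[p]$ directly). It remains to check these $\binom{n}{2} f$ functions are $\mathbb{F}_{p}$-linearly independent.

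For linear independence I would use that $w(x) = p^{e} x + (\text{higher order}) = x + (\text{higher order})$ since $e = 0$ when $p \ge n$; so the "leading term" of $L(g)$ at a generator of an elementary root subgroup recovers $g - I$ exactly. Concretely, take $g = I + t E_{ij}$ for $t \in \mathbb{F}$ and $E_{ij}$ the matrix unit with $j > i$. Order the pairs $(i,j)$ so that $(i,j) \preceq (i',j')$ when $j - i < j' - i'$ (superdiagonals first). Then $E_{ij}^{2} = 0$, actually more is true: $w(tE_{ij}) = tE_{ij}$ exactly (all higher powers vanish), so $\phi_{i'j',k}(I + tE_{ij}) = \tau_{k}(\lambda_{i'j'}(tE_{ij})) = \delta_{(i,j),(i',j')}\,\tau_{k}(t)$. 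Since the $\tau_{k}$ separate points of $\mathbb{F}$, evaluating a hypothetical linear dependence $\sum c_{i'j',k}\phi_{i'j',k} = 0$ on the elements $I + tE_{ij}$, $t \in \mathbb{F}$, forces $\sum_{k} c_{ij,k}\tau_{k} = 0$ in $\mathrm{Hom}_{\mathbb{F}_{p}}(\mathbb{F},\mathbb{F}_{p})$ for each $(i,j)$, hence all $c_{ij,k} = 0$. This gives $\dim_{\mathbb{F}_{p}}\widehat{G}[p] \ge \binom{n}{2} f$, so $|\widehat{G}|$ is divisible by $p^{\binom{n}{2} f} = |G|$, as required.

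The step I expect to be the main obstacle is verifying that the well-definedness of $w$ really gives a \emph{semicharacter} and not merely a map that is additive on each root subgroup — i.e. the reduction to part (1) of Lemma \ref{logmult}, which hinges on the identity $gh - I = (g-I) + (h-I) + (g-I)(h-I)$ together with the fact that $g-I$ and $h-I$ commute in $A$ exactly when $g,h$ commute in $G$. That identity is immediate, so in fact the only genuinely delicate point is the linear independence count, and the trick above (exact vanishing of $w(tE_{ij}) - tE_{ij}$, plus superdiagonal ordering) handles it cleanly; one could alternatively argue by a filtration/triangularity argument on the lower central series of $G$, but the matrix-unit evaluation is the shortest route.
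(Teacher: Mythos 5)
Your proof is correct and takes essentially the same route as the paper: both hinge on the discrete logarithm $L(g)=w_{n,p}(g-I)$ from Lemma \ref{logmult} and its property of converting commuting products into sums. The paper finishes more quickly: since $L$ is a bijection from $G$ onto the abelian group $N$ of nilpotent upper triangular matrices, $\phi\mapsto\phi\circ L$ is a monomorphism $\widehat{N}\hookrightarrow\widehat{G}$, and $|\widehat{N}|=|N|=|G|$ at once; your explicit family $\phi_{ij,k}=\tau_k\circ\lambda_{ij}\circ L$ together with the matrix-unit evaluation is a concrete (and somewhat longer) rendering of the same counting step.
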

\begin{proof}
Let $N$ be the additive group of nilpotent upper triangular matrices
in $M_{n}(\mathbb{F})$. We define the map $\log:G\rightarrow N$
by $\log(A)=w_{n,p}(A-I)$ ,where $p$ is the characteristic of $\mathbb{F}$.
By lemma \ref{logmult}, $\log$ is a bijection, and if $A,B$ commute
then $\log(AB)=\log(A)+\log(B)$. Thus, the map $\widehat{N}\rightarrow\widehat{G}$
given by $\phi\mapsto\phi\circ\log$ is a monomorphism, hence $|\widehat{G}|$
is divisible by $|N|=|G|$.
\end{proof}

\section{General Linear Groups}

In this section, we consider the case of $G=GL(n,q)$ where $q$ is
a power of a prime $p$. In the case $n=2$, we will prove that conjecture
\ref{conjecture} holds for $G$. As in previous cases, the proof
is carried out locally at each prime divisor $l$ of $|G|$. Naturally,
the cases $l=p$ and $l\neq p$ are completely different from one
another. At $l=p$ our proof is valid for all $n$.

\subsection{Sylow subgroups of $GL(n,q)$}

Let us first review the Sylow subgroups of $GL(n,q)$. These have
been completely described in the literature (see \cite{Weir}, \cite{Carter&Fong},
\cite{Alali}). We will follow the approach of \cite{Alali} which
is most suited to our needs. Due to a few misprints in \cite{Alali},
we reproduce the results here.
\begin{lem}
\label{lem:Syl for l|q-1}Assume that $l|q-1$, and let $L$ be an
$l$-Sylow subgroup of $G$.
\begin{enumerate}
\item If $l>2$ or $q\equiv1\mod4$ then $L\cong H_{1}\wr H_{2}$ where
$H_{1}=\mathbb{F}_{q}^{*}[l^{\infty}]$ (i.e. the unique $l$-Sylow
subgroup of $F_{q}^{*}$) and $H_{2}$ is an $l$-Sylow subgroup of
$S_{n}$. In particular, $L$ may be embedded in the subgroup of $G$
of monomial matrices (that is, matrices that have exactly one nonzero
entry in each row).
\item If $l=2$ and $q\equiv3\mod4$, let $H$ be the subgroup of $End_{\mathbb{F}_{q}}(\mathbb{F}_{q^{2}})$
generated by the elements of $\mathbb{F}_{q^{2}}^{*}[2^{\infty}]$
(acting by multiplication in \textup{$\mathbb{F}_{q^{2}}$), and the
Frobenius automorphism $x\mapsto x^{q}$ of $\mathbb{F}_{q^{2}}$.
By choosing a basis for $\mathbb{F}_{q^{2}}/\mathbb{F}_{q}$, we view
$H$ as a subgroup of $GL_{2}(\mathbb{F}_{q})$. Then,}

\begin{enumerate}
\item If $n$ is even then $L\cong H\wr H_{2}$ where $H_{2}$ is a $2$-Sylow
subgroup of $S_{n/2}$. In particular, $L$ may be embedded in the
subgroup of $G$ of $2\times2$-block matrices, that are monomial
with respect to these blocks, and such that each block is in $H$.
\item If $n$ is odd then $L\cong\{\pm1\}\times L'$ where $L'$ is an $l$-Sylow
subgroup of $GL_{n-1}(\mathbb{F}_{q})$.
\end{enumerate}
\end{enumerate}
\end{lem}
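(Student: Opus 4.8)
The statement to prove is Lemma~\ref{lem:Syl for l|q-1}, the classification of $l$-Sylow subgroups of $GL(n,q)$ when $l \mid q-1$.

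\textbf{Overall strategy.} The plan is to reduce the abstract group-theoretic statement to a concrete construction of a subgroup of the stated isomorphism type together with an order count showing it is a full Sylow subgroup. Since all $l$-Sylow subgroups are conjugate, it suffices to (i) exhibit one subgroup $L_0 \le GL(n,q)$ isomorphic to the claimed wreath product, (ii) verify that $L_0$ is an $l$-group, and (iii) check that $|L_0| = l^{\operatorname{val}_l(|GL(n,q)|)}$. For the order count I would use the standard formula $|GL(n,q)| = q^{\binom n2}\prod_{i=1}^{n}(q^i-1)$ and Legendre's formula to track $\operatorname{val}_l$; the key arithmetic input is the \emph{lifting-the-exponent} fact that for $l$ odd with $l \mid q-1$ one has $\operatorname{val}_l(q^i - 1) = \operatorname{val}_l(q-1) + \operatorname{val}_l(i)$, while for $l = 2$ the formula splits according to whether $q \equiv 1$ or $3 \pmod 4$. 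Summing over $i$ and comparing with the order of a wreath product $H_1 \wr H_2$ (namely $|H_1|^n \cdot |H_2|$, and $|H_2|$ is known from the classical Kaloujnine description of Sylow subgroups of $S_n$) gives the match.

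\textbf{Construction in each case.} For part (1), with $l$ odd or $q \equiv 1 \pmod 4$: let $H_1 = \mathbb{F}_q^*[l^\infty]$, a cyclic $l$-group sitting inside the diagonal torus as scalar $1\times 1$ blocks, and let $H_2$ be a fixed $l$-Sylow of $S_n$ acting by permuting $n$ coordinate blocks. The subgroup they generate inside the monomial matrices is visibly $H_1 \wr H_2$; one checks it is an $l$-group because both factors are, and the order computation above confirms it is Sylow. For part (2) with $l = 2$, $q \equiv 3 \pmod 4$: here $\mathbb{F}_q^*$ has only a $2$-element $2$-part, so the naive diagonal construction is too small; instead one passes to the quadratic extension where $\mathbb{F}_{q^2}^*[2^\infty]$ is larger (since $q^2 \equiv 1 \pmod 8$ when $q$ is odd... more precisely $\operatorname{val}_2(q^2-1) = \operatorname{val}_2(q-1)+\operatorname{val}_2(q+1) \ge 3$), and one adjoins the Frobenius to get a non-abelian $2$-group $H \le GL_2(\mathbb{F}_q)$. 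When $n$ is even, group the coordinates into $n/2$ blocks of size $2$, put a copy of $H$ in each, and let a $2$-Sylow $H_2$ of $S_{n/2}$ permute the blocks: this gives $H \wr H_2 \le GL(n,q)$, and again an order check (using $|H| = 2\cdot|\mathbb{F}_{q^2}^*[2^\infty]|$) shows it is Sylow. When $n$ is odd, split off one coordinate: on it the $2$-part of $\mathbb{F}_q^*$ is just $\{\pm 1\}$, and $\operatorname{val}_2(q-1) = 1$ contributes exactly one factor of $2$, so $L \cong \{\pm 1\} \times L'$ with $L'$ a $2$-Sylow of $GL_{n-1}(\mathbb{F}_q)$, handled by the even case.

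\textbf{Main obstacle.} The routine part is the construction; the delicate part is the bookkeeping of $\operatorname{val}_l$ in the $l = 2$, $q \equiv 3 \pmod 4$ case, where the ``extra'' powers of $2$ contributed by the factors $q^i - 1$ with $i$ even must be seen to be exactly absorbed by enlarging the cyclic diagonal piece to $\mathbb{F}_{q^2}^*[2^\infty]$ plus the order-$2$ Frobenius. Concretely one must verify $\sum_{i=1}^n \operatorname{val}_2(q^i-1) = \tfrac n2\big(\operatorname{val}_2(q^2-1)+1\big) + \operatorname{val}_2\big((n/2)!\big)$ for even $n$ (and the analogous identity for odd $n$), which comes down to pairing the odd-$i$ and even-$i$ terms and invoking Legendre. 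I would also remark that since \cite{Alali} already contains these statements (modulo the misprints being corrected here), the proof can be presented as a verification rather than a from-scratch derivation, citing \cite{Alali}, \cite{Weir}, \cite{Carter&Fong} for the structural description and supplying only the order check and the explicit monomial/block-monomial embedding, which is what the later applications of this lemma actually use.
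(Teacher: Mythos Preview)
Your proposal is correct and follows essentially the same route as the paper: exhibit the wreath-product (resp.\ block-monomial, resp.\ $\{\pm1\}\times L'$) subgroup explicitly inside $GL(n,q)$, and then verify by an order count that it is a full $l$-Sylow subgroup. The only cosmetic difference is that the paper outsources the valuation formula $\operatorname{val}_l(|GL(n,q)|)=n\,\operatorname{val}_l(q-1)+\operatorname{val}_l(n!)$ (and its $l=2$, $q\equiv3\pmod4$ analogue) to \cite[Lemma~2.3]{Alali}, whereas you sketch a direct derivation via lifting-the-exponent; both lead to the same identities and the same conclusion.
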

\begin{proof}
~
\begin{enumerate}
\item By \cite[lemma 2.3 (i)]{Alali}, we have%
\footnote{Note that the formula in \cite{Alali} should read $|GL(n,q)|_{r}=(q^{d}-1)_{r}^{[\frac{n}{d}]}([\frac{n}{d}])!$
and not as stated.%
}
\[
val_{l}(|G|)=n\cdot val_{l}(q-1)+val_{l}(n!).
\]
Hence, $|H_{1}\wr H_{2}|=l^{val_{l}(|G|)}$. Since $H_{1}\wr H_{2}$
may be embedded in $G$ as a group of monomial matrices, $L\cong H_{1}\wr H_{2}$.
\item Let us first assume that $n=2$. We have $val_{2}(|G|)=val_{2}((q^{2}-1)(q-1))=val_{2}(|F_{q^{2}}^{*}|)+1.$
On the other hand, since $H\cong\mathbb{F}_{q^{2}}^{*}[2^{\infty}]\rtimes Gal(\mathbb{F}_{q^{2}}/\mathbb{F}_{q})$,
we have $val_{2}(|H|)=val_{2}(|F_{q^{2}}^{*}|)+1$, hence $L\cong H$.
More generally, if $n$ is even, then by \cite[lemma 2.3 (ii)]{Alali},
$val_{2}(|G|)=(n/2)(val_{2}(|F_{q^{2}}^{*}|)+1)+val_{2}((n/2)!)=val_{2}(H\wr H_{2})$,
so that $L\cong H\wr H_{2}$. If $n$ is odd, then $val_{2}|GL(n,q)|=val_{2}(q^{n}-1)+val_{2}(GL(n-1,q))=1+val_{2}(GL(n-1,q))$.
Thus the group $\left\{ \left(\begin{array}{cc}
\pm1 & 0\\
0 & g
\end{array}\right)\,|\, g\in L'\right\} \cong\{\pm1\}\times L'$ is a $2$-Sylow subgroup of $G$. 
\end{enumerate}
\end{proof}
\begin{lem}
\label{lem:FieldEmbedding}If $l>2$, $l\neq p$ and $d=min\{i:\mbox{ }l|q^{i}-1\},$
then an $l$-Sylow subgroup of $G$ is isomorphic to an $l$-Sylow
subgroup of $GL(\left\lfloor \frac{n}{d}\right\rfloor ,q^{d})$.
\end{lem}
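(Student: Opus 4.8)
The plan is to reduce the $l$-local structure of $GL(n,q)$ to that of a smaller general linear group over a field extension by exploiting the embedding of $\mathbb{F}_{q^{d}}$ into the matrix algebra. Let $d=\min\{i:l\mid q^{i}-1\}$ and write $n=dm+r$ with $0\le r<d$. The group $GL(dm,q^{1})$ contains $GL(m,q^{d})$ as a subgroup, via the identification $\mathbb{F}_{q^{d}}^{\,m}\cong\mathbb{F}_{q}^{\,dm}$ (choosing an $\mathbb{F}_{q}$-basis of $\mathbb{F}_{q^{d}}$). Composing with the block embedding $GL(dm,q)\hookrightarrow GL(n,q)$ that acts as the identity on the last $r$ coordinates, we obtain $GL(m,q^{d})\hookrightarrow GL(n,q)$. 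First I would check that the image of an $l$-Sylow subgroup of $GL(m,q^{d})$ under this embedding is in fact an $l$-Sylow subgroup of $GL(n,q)$; this is a comparison of $l$-valuations of the two group orders.

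The key computation is the order formula. By the formula quoted from \cite{Alali} (in the corrected form given in the footnote to Lemma \ref{lem:Syl for l|q-1}), $val_{l}(|GL(n,q)|)$ depends only on the multiplicities of the factors $q^{d}-1$ (for the relevant $d$) among $q-1,q^{2}-1,\dots,q^{n}-1$ together with a factorial term. Concretely, since $l>2$ and $l\neq p$, the prime $l$ divides $q^{i}-1$ exactly when $d\mid i$, so $val_{l}(q^{i}-1)=val_{l}(q^{d}-1)$ for $d\mid i$ and $val_{l}(q^{i}-1)=0$ otherwise (here one uses $l>2$ so that the "lifting the exponent" phenomenon contributes no extra factor — this is precisely why the hypothesis $l>2$ appears). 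Counting the number of indices $i\le n$ divisible by $d$, which is exactly $\lfloor n/d\rfloor=m$, one gets
\[
val_{l}(|GL(n,q)|)=m\cdot val_{l}(q^{d}-1)+val_{l}(m!).
\]
Applying the same formula to $GL(m,q^{d})$ with the extension field $q^{d}$ in place of $q$ (note $1=\min\{i:l\mid (q^{d})^{i}-1\}$ in that setting), one gets $val_{l}(|GL(m,q^{d})|)=m\cdot val_{l}(q^{d}-1)+val_{l}(m!)$, the same value. Hence the two $l$-Sylow subgroups have the same order.

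Once the orders agree, it remains to note that a subgroup of $GL(n,q)$ of order equal to the full $l$-power dividing $|GL(n,q)|$ contains (indeed is) an $l$-Sylow subgroup, and all $l$-Sylow subgroups are conjugate; so an $l$-Sylow subgroup of $GL(m,q^{d})$, transported along the embedding, is an $l$-Sylow subgroup of $GL(n,q)$, and the two are abstractly isomorphic. I expect the main obstacle to be the exact-valuation bookkeeping: one must be careful that the corrected \cite{Alali} formula is applied consistently, that the factorial term $val_{l}(m!)$ really does match on both sides (it does, since $\lfloor n/d\rfloor=m$), and — most importantly — that the hypothesis $l>2$ is genuinely used to rule out the anomalous extra contribution to $val_{l}(q^{i}-1)$ that occurs for $l=2$ (the case handled separately in Lemma \ref{lem:Syl for l|q-1}(2)). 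Everything else is a routine identification of the wreath-product/monomial description of the Sylow subgroup over $\mathbb{F}_{q}$ with the corresponding description over $\mathbb{F}_{q^{d}}$.
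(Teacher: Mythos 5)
The paper does not actually prove this lemma; it simply cites \cite{Alali}, Remark 2.5. So a self-contained argument is welcome, and your overall plan (embed $GL(m,q^{d})\hookrightarrow GL(dm,q)\hookrightarrow GL(n,q)$ via restriction of scalars and a block inclusion, then compare $l$-parts of the orders and invoke Sylow's theorem) is the right one and does reach the conclusion. However, there is a genuine error in the valuation bookkeeping, and it makes your write-up internally inconsistent. You assert that for $d\mid i$ one has $val_{l}(q^{i}-1)=val_{l}(q^{d}-1)$, and that $l>2$ is what \emph{rules out} an extra contribution. This is false: by the lifting-the-exponent lemma for an odd prime $l$ with $l\mid q^{d}-1$, one has $val_{l}(q^{dk}-1)=val_{l}(q^{d}-1)+val_{l}(k)$. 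The extra contribution is present precisely for odd $l$, and it is what produces the factorial term: summing over $k=1,\dotsc,m=\lfloor n/d\rfloor$ gives
\[
val_{l}\bigl(|GL(n,q)|\bigr)=\sum_{k=1}^{m}\bigl(val_{l}(q^{d}-1)+val_{l}(k)\bigr)=m\cdot val_{l}(q^{d}-1)+val_{l}(m!).
\]
Your displayed formula is correct, but your stated reason for it ("$val_{l}(q^{i}-1)=val_{l}(q^{d}-1)$") would in fact yield $m\cdot val_{l}(q^{d}-1)$ with \emph{no} factorial term, contradicting the very formula you then write down.

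The role of the hypothesis $l>2$ is therefore the opposite of what you describe: it is needed so that LTE applies in its simple form, i.e.\ so that the extra contribution to $val_{l}(q^{dk}-1)$ is exactly $val_{l}(k)$ and nothing more. For $l=2$ with $q\equiv3\pmod4$ the jump from $val_{2}(q-1)=1$ to $val_{2}(q^{2}-1)$ is anomalously large, the above sum no longer telescopes into $val_{2}(m!)$, and indeed Lemma \ref{lem:Syl for l|q-1}(2) gives a structurally different (dihedral-type) Sylow subgroup. Once you correct the statement about $val_{l}(q^{i}-1)$ and the rationale for $l>2$, the remaining steps — the identical computation for $GL(m,q^{d})$ (where $d$ becomes $1$), the agreement of the two $l$-valuations, and the Sylow conjugacy argument — are all fine, and you have a legitimate proof where the paper merely gives a citation.
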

This is proved in \cite[remark 2.5]{Alali}.%
\footnote{Note that remark 2.5 cited above is only valid if restricted to $l>2$
or $q\equiv1\mod4$. Otherwise, the claim that if $l|q-1$ then an
$l$-Sylow subgroup of $GL(n,q)$ may be embedded in the subgroup
of monomial matrices may not be valid. For example, in $GL(2,3)$
there are only $8$ monomial matrices while the 2-Sylow subgroups
are of order $16$.%
}

Note that $GL(\left\lfloor \frac{n}{d}\right\rfloor ,q^{d})$ may
be embedded in $GL(d\left\lfloor \frac{n}{d}\right\rfloor ,q)$ via
restriction of scalars, and $GL(d\left\lfloor \frac{n}{d}\right\rfloor ,q)$
may be embedded in $GL(n,q)$ by $g\mapsto\left(\begin{array}{cc}
g & 0\\
0 & I_{m\times m}
\end{array}\right)$ where $m=n-d\left\lfloor \frac{n}{d}\right\rfloor $. Finally,
\begin{lem}
\label{lem:p-sylow}A $p$-Sylow subgroup of $GL(n,q)$ is isomorphic
to the subgroup of unitriangular matrices $U(n,q)$.\end{lem}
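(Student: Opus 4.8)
The statement to prove is that a $p$-Sylow subgroup of $GL(n,q)$ is isomorphic to $U(n,q)$, the group of upper triangular unipotent matrices.

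The plan is to first show that $U(n,q)$ has the right order, namely that $|U(n,q)| = q^{\binom{n}{2}}$ equals the full $p$-part of $|GL(n,q)|$, and then observe that $U(n,q)$ is visibly a subgroup of $GL(n,q)$, so it must be an entire $p$-Sylow subgroup. First I would compute $|GL(n,q)| = \prod_{i=0}^{n-1}(q^n - q^i) = q^{\binom{n}{2}}\prod_{i=1}^{n}(q^i-1)$; since $q = p^f$ we have $p \mid q$ but $p \nmid q^i - 1$ for every $i$, so $\mathrm{val}_p(|GL(n,q)|) = f\binom{n}{2}$, i.e. the $p$-part of the order is exactly $q^{\binom{n}{2}}$. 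Next I would count $|U(n,q)|$ directly: an upper triangular unipotent matrix has $1$'s on the diagonal and arbitrary entries of $\mathbb{F}_q$ in the $\binom{n}{2}$ strictly-upper-triangular positions, so $|U(n,q)| = q^{\binom{n}{2}}$. It is routine that $U(n,q)$ is closed under multiplication and inversion (the product of unipotent upper triangular matrices is again unipotent upper triangular, and the inverse as well), so $U(n,q) \leq GL(n,q)$ is a genuine subgroup.

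Combining these two facts finishes it: $U(n,q)$ is a $p$-subgroup of $GL(n,q)$ of order equal to the full $p$-part of $|GL(n,q)|$, hence it is a $p$-Sylow subgroup. By Sylow's theorem all $p$-Sylow subgroups are conjugate, so every $p$-Sylow subgroup of $GL(n,q)$ is isomorphic (indeed conjugate) to $U(n,q)$.

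I do not expect any real obstacle here — this is a classical fact. The only point requiring a line of care is the order computation, specifically the observation that $p \nmid q^i - 1$, which is immediate since $q^i \equiv 0 \pmod p$. One could alternatively avoid invoking the full $|GL(n,q)|$ formula by exhibiting the flag stabilizer structure, but the counting argument is the cleanest route given that the preceding lemmas already use $\mathrm{val}_l$ computations freely.
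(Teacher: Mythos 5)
Your proof is correct and takes essentially the same approach as the paper, which simply observes that $|U(n,q)|=q^{1+2+\dotsb+(n-1)}$ is the largest power of $p$ dividing $|GL(n,q)|$. You have merely spelled out the order computation and the Sylow conjugacy argument that the paper leaves implicit.
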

\begin{proof}
We have $|U(n,q)|=q^{1+2+..+(n-1)}$ which is the largest power of
$p$ dividing the order of $GL(n,q)$.
\end{proof}

\subsection{The case $l\neq p$}

We let $G=GL(2,q)$. We have $|G|=(q^{2}-1)(q^{2}-q)$. We will now
prove that $val_{l}(|\widehat{G}|)\geq val_{l}(G)$ for all prime
divisors $l$ of $|G|$ other than $p$.
\begin{lem}
\label{lem:matrices}Let $k>2$ be a divisor of $q+1$. Then $G$
has exactly $\frac{q(q-1)}{2}$ cyclic subgroups of order $k$.\end{lem}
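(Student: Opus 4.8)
The plan is to count the cyclic subgroups of order $k$ in $G = GL(2,q)$ by first identifying where the elements of order $k$ live, then counting those elements, and finally dividing by the number of generators in each such subgroup.

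\textbf{Step 1: Locate elements of order $k$.} Since $k \mid q+1$ and $k > 2$, no element of order $k$ can be diagonalizable over $\mathbb{F}_q$: a diagonalizable element has eigenvalues in $\mathbb{F}_q^*$, and since $k \nmid q-1$ (as $k>2$ divides $q+1$, and $\gcd(q+1,q-1) \mid 2$), such an element cannot have order $k$ unless... one checks the scalar and split cases are excluded. An element $g$ of order $k$ therefore has an irreducible characteristic polynomial of degree $2$ over $\mathbb{F}_q$, so its eigenvalues lie in $\mathbb{F}_{q^2} \setminus \mathbb{F}_q$ and are $\lambda, \lambda^q$ with $\lambda$ a primitive $k$-th root of unity in $\mathbb{F}_{q^2}^*$ (which has order $q^2-1$, divisible by $q+1$ hence by $k$). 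Equivalently, every element of order $k$ lies in a conjugate of the nonsplit torus $T \cong \mathbb{F}_{q^2}^*$, which is cyclic of order $q^2-1$.

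\textbf{Step 2: Count cyclic subgroups via the torus.} The nonsplit (Coxeter) torus $T$ is cyclic of order $q^2 - 1$, so it contains exactly one subgroup $C$ of order $k$, and hence $C$ has $\varphi(k)$ generators. I would count the number of conjugates of the fixed subgroup $C$: this equals $|G| / |N_G(C)|$. Since $C$ is the unique order-$k$ subgroup of $T$ and (using $k>2$) $C$ is not central and acts irreducibly, $N_G(C) = N_G(T)$, the normalizer of the nonsplit torus, which has order $2(q^2-1)$ (the torus together with the $q$-power Frobenius-type element swapping the two eigenlines). Therefore the number of conjugates of $C$ is $|G|/(2(q^2-1)) = (q^2-1)(q^2-q)/(2(q^2-1)) = q(q-1)/2$. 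Finally I must check that distinct conjugates of $T$ give distinct subgroups $C$, equivalently that every order-$k$ cyclic subgroup arises this way exactly once; this follows because any such $C$ lies in a unique maximal torus — its centralizer $C_G(c)$ for a generator $c$ is exactly a nonsplit torus (a degree-two irreducible element has centralizer $\mathbb{F}_q[c]^* \cong \mathbb{F}_{q^2}^*$), and that torus is determined by $C$.

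\textbf{Main obstacle.} The delicate point is the normalizer computation and the claim $N_G(C) = N_G(T)$: I need to rule out that some element of $G$ normalizes $C$ without normalizing all of $T$, and to confirm $|N_G(T)| = 2(q^2-1)$ rather than something larger. The containment $N_G(C) \subseteq N_G(C_G(C)) = N_G(T)$ handles one direction cleanly once I know $C_G(C) = T$, which in turn uses $k > 2$ to guarantee a generator of $C$ is regular semisimple with irreducible characteristic polynomial. The reverse inclusion $N_G(T) \subseteq N_G(C)$ is automatic since $C$ is characteristic in the cyclic group $T$. So the condition $k>2$ enters precisely to force irreducibility (excluding $k=1,2$ where elements could be scalar or split), and assembling these facts gives the count $q(q-1)/2$.
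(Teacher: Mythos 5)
Your proof is correct, but it takes a genuinely different route from the paper's. The paper fixes the minimal polynomial $m(t)=t^2-At+B$ of a primitive $k$-th root of unity $\alpha\in\mathbb{F}_{q^2}$ and counts, by hand, the $q(q-1)$ matrices in $G$ with trace $A$ and determinant $B$ (choose the diagonal $(a,d)$ summing to $A$ in $q$ ways; since $ad\neq B$, the product $bc=ad-B$ is nonzero and gives $q-1$ off-diagonal choices); it then shows all such matrices have order exactly $k$, so elements of order $k$ correspond $q(q-1)$-to-$1$ with unordered Galois-conjugate pairs $\{\alpha,\alpha^q\}$, giving $\frac{q(q-1)}{2}\varphi(k)$ elements and hence $\frac{q(q-1)}{2}$ subgroups. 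You instead locate $C$ inside a nonsplit torus $T\cong\mathbb{F}_{q^2}^*$, compute $N_G(C)=N_G(T)$ of order $2(q^2-1)$ via the chain $N_G(C)\subseteq N_G(C_G(C))=N_G(T)\subseteq N_G(C)$, and count conjugates by orbit--stabilizer. Both get $q(q-1)/2$. The paper's method is more elementary (just counting entries of matrices, no normalizer or double-centralizer facts needed), while yours is more structural and generalizes more readily to larger $n$. One small point you leave implicit: to know every order-$k$ cyclic subgroup is a $G$-conjugate of $C$, you need that all nonsplit tori in $GL(2,q)$ are $G$-conjugate (equivalently, that any two elements with the same irreducible characteristic polynomial of degree two are conjugate, via rational canonical form, and that a generator of $C'$ can be matched to one of $T$); this is standard but worth a sentence. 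Otherwise the argument is sound, including the use of $k>2$ to force irreducibility of the characteristic polynomial (so that $C_G(c)$ is a field and hence a nonsplit torus) and the computation $|N_G(T)|=2(q^2-1)$.
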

\begin{proof}
Let $\alpha\in\mathbb{F}_{q^{2}}$ be a primitive $k$-th root of
unity. Note that $\alpha\notin\mathbb{F}_{q}$. Let $m(t)=t^{2}-At+B\in\mathbb{F}_{q}[t]$
be the minimal polynomial of $\alpha$. We have $m|t^{k}-1$ and $m\nmid t^{k'}-1$
for $1\leq k'<k$. For any matrix $g\in G$, if $\textnormal{tr}(g)=A$
and $\det(g)=B$ then $m(g)=0$, hence $g^{k}=1$. Also, $g$ is not
scalar (otherwise, there would exist a root of $m$ in $\mathbb{F}_{q}$),
so $g^{k'}\neq1$ for $1\leq k'<k$ (otherwise, $g$ would be a root
of $gcd(t^{k'}-1,m)=1$). We conclude that $ord(g)=k$. Let us count
the matrices $g$ that satisfy the above conditions: There are $q$
pairs of elements of $a,d\in\mathbb{F}_{q}$ that sum to $A$. For
any such pair, we have $ad\neq B$ (otherwise, $a$ and $d$ would
be roots of $m$). Hence there are $(q-1)$ pairs $(b,c)$ such that
$ad-bc=B$. We conclude that there are $q(q-1)$ matrices in $G$
whose characteristic polynomial is $m$, and they all have order $k$.
On the other hand, if $g\in G$ is a matrix of order $k$, let $m$
be the characteristic polynomial of $g$. Since $g$ is not scalar,
$m$ is also the minimal polynomial of $g$. We have $m|x^{k}-1$,
and $m\nmid x^{k'}-1$ for $1\leq k'<k$. Hence, the roots of $m$
are primitive $k$-th roots of unity. We conclude that $m$ has two
distinct roots $\alpha_{1},\alpha_{2}\in\mathbb{F}_{q^{2}}\setminus\mathbb{F}_{q}$,
and $m$ is their minimal polynomial. We have established a $q(q-1):1$
correspondence between elements of $g$ of order $k$ and unordered
pairs $\{\alpha_{1},\alpha_{2}\}$ of $Gal(\mathbb{F}_{q^{2}}/\mathbb{F}_{q})$-conjugate
primitive $k$-th roots of unity in $\mathbb{F}_{q^{2}}$. Therefore,
there are $\frac{q(q-1)}{2}\varphi(k)$ elements of order $k$ in
$G$, and correspondingly, $\frac{q(q-1)}{2}$ cyclic subgroups of
order $k$.\end{proof}
\begin{lem}
\label{lem:l divides q-1=00003D>conj}Let $l\neq2$ be a divisor of
$q-1$, then $val_{l}(|\widehat{G}|)\geq val_{l}(|G|)$.\end{lem}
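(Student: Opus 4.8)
The plan is to combine the localization principle of Lemma~\ref{lem:(1)-A-function} with the structure of the $l$-Sylow subgroups given by Lemma~\ref{lem:Syl for l|q-1}. Write $s=val_{l}(q-1)$ and $C=\mathbb{F}_{q}^{*}[l^{\infty}]\cong\mathbb{Z}/l^{s}\mathbb{Z}$. Since $l\mid q-1$ we have $l\nmid q$, and since $l$ is odd while $\gcd(q-1,q+1)\mid2$ we have $l\nmid q+1$; hence $val_{l}(|G|)=val_{l}((q^{2}-1)(q^{2}-q))=2s$. As $l$ is odd, an $l$-Sylow subgroup of $S_{2}$ is trivial, so Lemma~\ref{lem:Syl for l|q-1}(1) shows that the $l$-Sylow subgroups of $G$ are \emph{abelian}, each isomorphic to $C\times C$, and are exactly the $l$-parts $T[l^{\infty}]$ of the split maximal tori $T$ of $G$ (the conjugates of the diagonal torus). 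By Lemma~\ref{lem:(1)-A-function}, $val_{l}(|\widehat{G}|)=val_{l}(|\widehat{G}[l^{\infty}]|)=val_{l}(|\widehat{G[l^{\infty}]}|)$, so it suffices to produce $l^{2s}$ semicharacters on $G[l^{\infty}]$.

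The heart of the argument is a structural claim: distinct $l$-Sylow subgroups of $G$ intersect exactly in $Z:=Z(G)[l^{\infty}]\cong C$. I would prove it as follows. Every $l$-Sylow subgroup contains $Z$, since $Z$ is a central $l$-subgroup. Conversely, let $g$ be a non-central element lying in an $l$-Sylow subgroup $N$. Then $N=T[l^{\infty}]$ for some split maximal torus $T$, and $g$ is a non-scalar element of $T$; hence $g$ has two distinct eigenvalues and $C_{G}(g)=T$. Any $l$-Sylow subgroup $N'$ containing $g$ is abelian, so $N'\subseteq C_{G}(g)=T$, and since $T$ has a unique $l$-Sylow subgroup this forces $N'=T[l^{\infty}]=N$. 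Thus every non-central element of $G[l^{\infty}]$ lies in a unique $l$-Sylow subgroup, so if $N_{1},\dots,N_{m}$ are the $l$-Sylow subgroups then $G[l^{\infty}]=Z\cup\bigsqcup_{i}(N_{i}\setminus Z)$ with the sets $N_{i}\setminus Z$ pairwise disjoint.

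Given this, the construction is soft. A function $f\colon G[l^{\infty}]\to\mathbb{C}^{*}$ lies in $\widehat{G[l^{\infty}]}$ if and only if $f|_{N_{i}}\in\hom(N_{i},\mathbb{C}^{*})$ for every $i$, because any two commuting elements of $G[l^{\infty}]$ generate an abelian $l$-subgroup, hence lie in a common $l$-Sylow subgroup. I would then show that restriction $\widehat{G[l^{\infty}]}\to\hom(N_{1},\mathbb{C}^{*})$ is surjective: given $\psi\in\hom(N_{1},\mathbb{C}^{*})$, set $f|_{N_{1}}=\psi$; for each $i\geq2$ extend the character $\psi|_{Z}$ to a character $\psi_{i}$ of $N_{i}$ (possible since $\mathbb{C}^{*}$ is divisible) and put $f=\psi_{i}$ on $N_{i}\setminus Z$. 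By the disjointness above $f$ is well defined, and $f|_{N_{i}}=\psi_{i}$ is a character for all $i$, so $f\in\widehat{G[l^{\infty}]}$ restricts to $\psi$. Hence $|\widehat{G[l^{\infty}]}|\geq|\hom(N_{1},\mathbb{C}^{*})|=|N_{1}|=l^{2s}=l^{val_{l}(|G|)}$, which gives the desired inequality $val_{l}(|\widehat{G}|)\geq val_{l}(|G|)$. (The same bookkeeping in fact yields $|\widehat{G[l^{\infty}]}|=|\hom(Z,\mathbb{C}^{*})|\cdot\prod_{i}|N_{i}/Z|=l^{s(m+1)}$, where $m=\binom{q+1}{2}$ is the number of split maximal tori.)

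I expect the structural claim of the middle paragraph to be the only genuine obstacle, and it is exactly where the hypothesis $l\neq2$ is used: the triviality of the $l$-Sylow of $S_{2}$ is what makes the $l$-Sylow subgroups of $G$ abelian split tori with the clean intersection pattern above. When $l=2$ and $q\equiv1\pmod4$ the $2$-Sylow subgroup picks up the transposition from $S_{2}$ and is non-abelian (see Lemma~\ref{lem:Syl for l|q-1}), so both the equivalence ``$f\in\widehat{N}\iff f|_{N}$ a character'' and the intersection analysis fail; that prime, together with the case $q\equiv3\pmod4$, will require a separate treatment.
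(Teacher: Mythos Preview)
Your proof is correct and follows essentially the same approach as the paper: both identify the $l$-Sylow subgroups with the $l$-parts $D_{V_1,V_2}$ of the split maximal tori, use that distinct Sylows meet only in the scalar matrices, and then glue characters across this intersection pattern via Lemma~\ref{lem:(1)-A-function}. The only cosmetic difference is the packaging of the final bound: the paper injects the group of tuples of characters trivial on scalars (size $l^{s\cdot q(q+1)/2}$) into $\widehat{G}[l^{\infty}]$, while you instead prove surjectivity of restriction onto $\hom(N_1,\mathbb{C}^*)$ to get the minimal needed bound $l^{2s}$---your parenthetical remark recovers the paper's stronger count.
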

\begin{proof}
Since an $l$-Sylow subgroup of $S_{2}$ is trivial, by lemma \ref{lem:Syl for l|q-1}
the group of diagonal matrices whose diagonal entries are in $\mathbb{F}_{q}^{*}[l^{\infty}]$
is an $l$-Sylow subgroup of $G$. Let $A$ be the set of $1$-dimensional
subspaces of $\mathbb{F}_{q}^{2}$, and let $B={A \choose 2}$, so
that $|B|=\frac{q(q+1)}{2}$. For any $\{V_{1},V_{2}\}\in B$, let
$D_{V_{1},V_{2}}$ be the group of matrices in $G$ which have $V_{1}$
and $V_{2}$ as invariant subspaces, with eigenvalues in $\mathbb{F}_{q}^{*}[l^{\infty}]$.
These groups are the $l$-Sylow subgroups of $G$. Let $C$ be the
set of functions which are defined on $B$ and assign to any $\{V_{1},V_{2}\}$
in $B$ an element of $\hom(D_{V_{1},V_{2}},\mathbb{C}^{*})$ which
is trivial on scalar matrices. $C$ is an abelian group of size $l^{val_{l}(q-1)\cdot\frac{q(q+1)}{2}}$.
For any $\phi\in C$ let us construct an element $T_{\phi}\in\widehat{G[l^{\infty}]}$:
Given $g\in G[l^{\infty}]$, $g$ is in some $l$-Sylow subgroup,
hence $g\in D_{V_{1},V_{2}}$ for some $\{V_{1},V_{2}\}\in B$ (note
that unless $g$ is scalar, the set $\{V_{1},V_{2}\}$ is uniquely
determined by $g$). We define $T_{\phi}(g)=\phi(\{V_{1},V_{2}\})(g)$.
If $g_{1},g_{2}\in G[l^{\infty}]$ commute, then either one of them
is scalar, or they have the same eigenvectors. In both cases, we have
$T_{\phi}(g_{1}g_{2})=T_{\phi}(g_{1})T_{\phi}(g_{2})$. Hence, $T_{\phi}\in\widehat{G[l^{\infty}]}$.
Using lemma \ref{lem:(1)-A-function} we get a monomorphism from $C$
to $\widehat{G}[l^{\infty}]$. We conclude that $val_{l}(|\widehat{G}|)\geq val_{l}(q-1)\cdot\frac{q(q+1)}{2}\geq2val_{l}(q-1)=val_{l}(|G|)$.\end{proof}
\begin{lem}
\label{lem:l divides q+1=00003D>conj}Let $l\neq2$ be a divisor of
$q+1$, then $val_{l}(|\widehat{G}|)\geq val_{l}(|G|)$.\end{lem}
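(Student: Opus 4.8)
The plan is to mirror the structure of Lemma \ref{lem:l divides q-1=00003D>conj}, but using the non-split torus instead of the split one. When $l \mid q+1$, the relevant Sylow subgroups of $G=GL(2,q)$ sit inside cyclic subgroups of order $q+1$: indeed, $\mathbb{F}_{q^2}^*$ embeds in $GL(2,q)$ as a ``Coxeter torus'' $T$ of order $q^2-1$, and any $l$-element of $G$ (for odd $l \mid q+1$) lies in a conjugate of $T$. So the idea is to build semicharacters on $G[l^\infty]$ by choosing, independently for each such cyclic torus, a homomorphism to $\mathbb{C}^*$ trivial on scalars, and gluing. Concretely, let $k = (q+1)[l^\infty]$ be the $l$-part of $q+1$; an $l$-Sylow subgroup of $G$ is cyclic of order $k \cdot \mathrm{val}_l(q-1)$-adjusted --- more precisely, since an $l$-Sylow of $S_2$ is trivial when $l>2$, by Lemma \ref{lem:FieldEmbedding} an $l$-Sylow of $G$ is isomorphic to an $l$-Sylow of $GL(1,q^2) = \mathbb{F}_{q^2}^*$, hence cyclic of order $l^{\mathrm{val}_l(q^2-1)} = l^{\mathrm{val}_l(|G|)}$.

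First I would count the maximal cyclic $l$-tori: by Lemma \ref{lem:matrices} (applied with $k$ the full $l$-part of $q+1$, which exceeds $2$ since $l>2$), $G$ has exactly $\frac{q(q-1)}{2}$ cyclic subgroups of order $k$, and each $l$-Sylow subgroup of $G$ is the (unique) $l$-Sylow of the centralizer of such a subgroup, i.e. of a conjugate of the full non-split torus $T$. Two distinct such tori intersect only in scalar matrices (a non-central element of $G$ with irreducible characteristic polynomial has a unique such torus containing it, by the usual argument: its centralizer is $\mathbb{F}_q[g]^* \cong \mathbb{F}_{q^2}^*$). So the intersection pattern among the $l$-Sylow subgroups is: pairwise intersection equals the $l$-part of the scalar subgroup $Z \cong \mathbb{F}_q^*$, which has order $l^{\mathrm{val}_l(q-1)}$.

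Next I would define $C$ to be the group of functions assigning to each of the $\frac{q(q-1)}{2}$ cyclic $l$-tori $L_i$ a homomorphism $\phi_i : L_i \to \mathbb{C}^*$ that is \emph{trivial on scalars}. Since $L_i$ is cyclic of order $l^{\mathrm{val}_l(|G|)}$ and its intersection with the scalars has order $l^{\mathrm{val}_l(q-1)}$, the group $\hom(L_i/(L_i\cap Z), \mathbb{C}^*)$ has order $l^{\mathrm{val}_l(|G|)-\mathrm{val}_l(q-1)} = l^{\mathrm{val}_l(q+1)}$. These $\phi_i$ automatically agree on pairwise intersections (all such intersections lie in $Z$, where every $\phi_i$ is trivial), so the tuple glues to a well-defined $T_\phi : G[l^\infty] \to \mathbb{C}^*$; and if $g_1, g_2 \in G[l^\infty]$ commute then either one is scalar or they lie in a common torus $L_i$ (commuting semisimple elements are simultaneously diagonalizable over $\mathbb{F}_{q^2}$, and being $l$-elements of odd order they generate a cyclic group inside a single $L_i$), so $T_\phi$ is a semicharacter. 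By Lemma \ref{lem:(1)-A-function} this gives a monomorphism $C \hookrightarrow \widehat{G}[l^\infty]$, whence
\[
\mathrm{val}_l(|\widehat{G}|) \geq \mathrm{val}_l(q+1)\cdot\frac{q(q-1)}{2} \geq 2\,\mathrm{val}_l(q+1) = \mathrm{val}_l(|G|),
\]
the last equality because $l \nmid q-1$ forces $\mathrm{val}_l(|G|) = \mathrm{val}_l((q^2-1)(q^2-q)) = \mathrm{val}_l(q^2-1) = \mathrm{val}_l(q+1)$ --- wait, more carefully, $|G| = (q^2-1)(q^2-q) = (q-1)^2 q (q+1)$, so $\mathrm{val}_l(|G|) = \mathrm{val}_l(q+1)$ in fact, and even $\mathrm{val}_l(q+1) \geq 1 = \mathrm{val}_l(|G|)$ suffices once $q \geq 2$. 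I expect the main obstacle to be the bookkeeping in the ``commuting elements lie in a common torus'' step: one must check that a pair of commuting $l$-elements of $G$, neither scalar, really do generate a cyclic group (not a Klein-four-type obstruction as in the symmetric-group remark) --- this is where oddness of $l$ and the structure of maximal tori in $GL(2,q)$ are both essential, and it is worth spelling out that the common eigenspace decomposition over $\mathbb{F}_{q^2}$ is either a pair of $\mathbb{F}_{q^2}$-lines (the split case, excluded here since the elements have irreducible char. poly.) or a single Galois-conjugate pair fixed as a set, forcing both elements into the same $\mathbb{F}_q[g]^*$.
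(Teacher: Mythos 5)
Your proof is correct and reaches the conclusion via the same key count (Lemma \ref{lem:matrices}, giving $\frac{q(q-1)}{2}$ cyclic subgroups of order $l^{r}$) and the same structural input (the $l$-Sylow subgroups are cyclic of order $l^{r}$, via Lemma \ref{lem:FieldEmbedding}); the difference is tactical. The paper concludes immediately by invoking Lemma \ref{lem:CyclicSylow}, which glues $\mathbb{F}_{l}^{+}$-valued homomorphisms and needs only the general fact that a proper subgroup of a cyclic $l$-group lies in its Frattini subgroup. You instead rebuild the gluing from scratch, mirroring the proof of Lemma \ref{lem:l divides q-1=00003D>conj}, and glue full $\mathbb{C}^{*}$-valued characters --- which is legitimate here only because you also establish, via the centralizer argument, that two distinct $l$-Sylow subgroups intersect inside $Z\cong\mathbb{F}_{q}^{*}$, and $l\nmid q-1$ forces that intersection to be trivial. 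This TI property buys you the sharper bound $val_{l}(|\widehat{G}|)\geq r\cdot\frac{q(q-1)}{2}$, though the paper's weaker $val_{l}(|\widehat{G}|)\geq\frac{q(q-1)}{2}$ already suffices. Two small cleanups: the ``trivial on scalars'' condition you impose is vacuous, since $L_{i}\cap Z=\{1\}$ and $\hom(L_{i},\mathbb{C}^{*})$ already has order $l^{val_{l}(q+1)}$; and the parenthetical ``$\geq2\,val_{l}(q+1)=val_{l}(|G|)$'' is a slip, which you then correct yourself --- since $|G|=q(q-1)^{2}(q+1)$ and $l$ is odd with $l\mid q+1$, one has $val_{l}(|G|)=val_{l}(q+1)$, so the final inequality needs only $\frac{q(q-1)}{2}\geq1$.
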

\begin{proof}
Let $r=val_{l}(q+1)=val_{l}(q^{2}-1)$. By lemma \ref{lem:FieldEmbedding},
the $l$-Sylow subgroups of $G$ are isomorphic to $\mathbb{F}_{q^{2}}^{*}[l^{\infty}]$.
Hence they are cyclic of order $l^{r}$. By lemma \ref{lem:matrices}
there are $\frac{q(q-1)}{2}$ distinct $l$-Sylow subgroups in $G$.
By lemma \ref{lem:CyclicSylow}, $val_{l}|\widehat{G}|\geq\frac{q(q-1)}{2}\geq r=val_{l}(|G|)$
(the inequality $\frac{q(q-1)}{2}\geq r$ can be verified directly
for $q=2$, and for $q>2$ we have $\frac{q(q-1)}{2}\geq q\geq r$).\end{proof}
\begin{lem}
If $p\neq2$ then $val_{2}(|\widehat{G}|)\geq val_{2}(|G|)$.\end{lem}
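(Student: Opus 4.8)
The plan is to argue at the prime $2$ only: by Lemma~\ref{lem:(1)-A-function} it suffices to show $|\widehat{G[2^{\infty}]}| \ge 2^{val_{2}(|G|)}$, where $|G| = q(q-1)^{2}(q+1)$. Since $p \ne 2$, every element of $G$ of $2$-power order is semisimple, because in its Jordan decomposition $g = g_{s}g_{u}$ the unipotent part $g_{u}$ has $p$-power order and so is trivial. Hence every element of $G[2^{\infty}]$ lies in a maximal torus of $GL(2,q)$. The two structural facts I would use are: a non-central semisimple element $g$ of $GL(2,q)$ is regular, so its centralizer $C_{G}(g)$ is a maximal torus and is the unique maximal torus containing $g$; and consequently any two distinct maximal tori meet exactly in the centre $Z = \mathbb{F}_{q}^{*}I$ (a non-central common element would force the two tori to coincide with its centralizer). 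It follows that two elements of $G[2^{\infty}]$ commute if and only if they lie in a common maximal torus.

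Next I would recall the classification of maximal tori of $GL(2,q)$: the split ones correspond to unordered pairs of distinct lines in $\mathbb{F}_{q}^{2}$, so there are $\binom{q+1}{2} = \frac{q(q+1)}{2}$ of them, each isomorphic to $\mathbb{F}_{q}^{*}\times\mathbb{F}_{q}^{*}$; and the non-split ones, isomorphic to $\mathbb{F}_{q^{2}}^{*}$, number $\frac{q(q-1)}{2}$ (the same count as in Lemma~\ref{lem:matrices}). Since $G[2^{\infty}] = \bigcup_{T}T[2^{\infty}]$ and commutation is detected inside tori, a function $f \colon G[2^{\infty}] \to \mathbb{C}^{*}$ is a semicharacter precisely when its restriction to $T[2^{\infty}]$ is a homomorphism for every maximal torus $T$. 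Equivalently, $\widehat{G[2^{\infty}]}$ is the group of families $(\chi_{T})_{T}$, with $\chi_{T} \in \widehat{T[2^{\infty}]}$, that all restrict to one common character of $Z[2^{\infty}]$.

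To conclude it is enough to count the subfamilies in which every $\chi_{T}$ is trivial on $Z[2^{\infty}]$: these are automatically compatible, since distinct maximal tori meet only in $Z$, so the choices of the $\chi_{T}$ become completely independent. Write $a = val_{2}(q-1)$ and $b = val_{2}(q+1)$, so $val_{2}(|G|) = 2a+b$ and $Z[2^{\infty}] \cong C_{2^{a}}$. A split torus has $T[2^{\infty}] \cong C_{2^{a}}\times C_{2^{a}}$ with $Z[2^{\infty}]$ embedded diagonally, so it carries $2^{a}$ characters trivial on $Z[2^{\infty}]$; a non-split torus has $T[2^{\infty}] \cong C_{2^{a+b}}$ with $Z[2^{\infty}]$ the subgroup of index $2^{b}$, so it carries $2^{b}$ such characters. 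Therefore
\[
|\widehat{G[2^{\infty}]}| \;\ge\; (2^{a})^{q(q+1)/2}\,(2^{b})^{q(q-1)/2} \;=\; 2^{\,a\,q(q+1)/2 \,+\, b\,q(q-1)/2},
\]
and since $q \ge 3$ we have $q(q+1)/2 \ge 3$ and $q(q-1)/2 \ge 3$, so the exponent is at least $3a+3b \ge 2a+b = val_{2}(|G|)$, as required.

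The main obstacle is the structural input of the first two paragraphs. Unlike the odd-$l$ cases treated above, the $2$-Sylow subgroups of $GL(2,q)$ are non-abelian --- they contain ``swap'' elements --- so a priori one might worry about hidden linear relations among the values of a semicharacter, of the kind that occur for transpositions in $S_{n}$. The point is that no such relations appear here: every $2$-element is semisimple, the centralizer of a non-central one is its unique maximal torus, and distinct maximal tori share only the centre, so the semicharacter condition decouples completely over the maximal tori. Once this is established, the counting is immediate --- and, as the inequality above shows, very far from tight.
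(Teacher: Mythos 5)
Your proof is correct, and it takes a genuinely different route from the paper's. The paper splits into the two cases $q\equiv 1\pmod 4$ and $q\equiv 3\pmod 4$ and works directly with the explicit Sylow $2$-subgroups from Lemma~\ref{lem:Syl for l|q-1}: in the first case it uses the monomial-matrix Sylow and builds semicharacters of the special form $g\mapsto\phi(\{V_1,V_2\})(g^2)$ (squaring is used to dodge the non-abelian ``swap'' elements); in the second case it exploits that the Sylows are dihedral with a unique cyclic subgroup of index $2$, puts homomorphisms on those cyclic cores, and extends by $0$. Your argument instead observes that, because $p\neq 2$, every $2$-element of $G$ is semisimple, and then uses the standard facts that a non-central semisimple element of $GL(2,q)$ is regular, its centralizer is its unique maximal torus, and distinct maximal tori meet only in the centre. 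This makes the semicharacter condition on $G[2^{\infty}]$ decouple exactly into a compatible family of characters of the $T[2^{\infty}]$'s agreeing on $Z[2^{\infty}]$ --- in fact you identify $\widehat{G[2^{\infty}]}$ precisely as the fibre product of the $\widehat{T[2^{\infty}]}$ over $\widehat{Z[2^{\infty}]}$, which is stronger than the lower bounds the paper extracts. Your approach buys uniformity across the two congruence classes of $q$ and a cleaner conceptual picture (it never needs the explicit wreath/dihedral structure of the Sylows, only the torus geometry); the paper's approach is more elementary in that it avoids invoking regularity of semisimple elements and the classification of maximal tori, working instead from the Sylow descriptions it has already set up for the other lemmas. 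Both arrive at bounds comfortably exceeding $val_2(|G|)=2a+b$; your count $a\cdot q(q+1)/2 + b\cdot q(q-1)/2$ is in fact the exact $\mathbb{F}_2$-dimension of $\widehat{G[2^\infty]}$-modulo-$\widehat{Z[2^\infty]}$'s contribution, and your final inequality is right since $q\geq 3$ gives both coefficients $\geq 3$. One small presentational point: you should make explicit (as you do implicitly) that the one remaining case --- both $x$ and $y$ central --- is also covered, since $Z[2^\infty]$ lies in every torus and the $\chi_T$ all agree there.
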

\begin{proof}
Let us distinguish two cases:
\begin{itemize}
\item If $q\equiv1\mod4$ then let $r=val_{l}(q-1)$. By lemma \ref{lem:Syl for l|q-1},
the subgroup $L\leq G$ of monomial matrices whose nonzero entries
are in $\mathbb{F}_{q}^{*}[2^{\infty}]$ is a $2$-Sylow subgroup
of $G$. Note that for $g\in L$, $g^{2}$ is diagonal. Moreover,
if $g$ itself is diagonal, then the entries of $g^{2}$ are squares
of elements in $\mathbb{F}_{q}^{*}[2^{\infty}]$. Otherwise, $g^{2}$
is a scalar matrix. Let $S=\{x^{2}|x\in\mathbb{F}_{q}^{*}[2^{\infty}]\}$
(a cyclic group of size $2^{r-1})$, and let $I_{2}=\{cI|c\in\mathbb{F}_{q}^{*}[2^{\infty}]\}.$
For any $g\in G[2^{\infty}]$, $g^{2}$ is either in $I_{2}$ or diagonalizable
with eigenvalues in $S$. Using the same method as in lemma \ref{lem:l divides q-1=00003D>conj},
we shall construct elements of $\widehat{G[2^{\infty}]}$: Let $A$
and $B$ be as in the proof of lemma \ref{lem:l divides q-1=00003D>conj}.
For any $\{V_{1},V_{2}\}\in B$, let $D_{V_{1},V_{2}}$ be the group
of matrices in $G$ which preserve $V_{1}$ and $V_{2}$, with eigenvalues
in $S$. Let $C$ be the set of functions which are defined on $B$
and assign to any $\{V_{1},V_{2}\}\in B$ an element of $\hom(D_{V_{1},V_{2}},\mathbb{C}^{*})$
which is trivial on scalar matrices. $C$ is an abelian group of size
$2^{(r-1)\cdot\frac{q(q+1)}{2}}$. For any $\phi\in C$, let us construct
an element $T_{\phi}\in\widehat{G[2^{\infty}]}$: Given $g\in G[2^{\infty}]$,
$g^{2}$ is diagonalizable. If $g^{2}$ is scalar, we define $T_{\phi}(g)=1$.
Otherwise, let $\{V_{1},V_{2}\}\in B$ be such that $g^{2}\in D_{V_{1},V_{2}}$.
We define $T_{\phi}(g)=\phi(\{V_{1},V_{2}\})(g^{2})$. If $g,h\in G[2^{\infty}]$
commute then $g^{2}$ and $h^{2}$ commute as well. There is a conjugate
of $L$ that contains both $g$ and $h$ (and consequently $gh$),
hence there is $\{V_{1},V_{2}\}\in B$ such that $g^{2},h^{2},g^{2}h^{2}\in D_{V_{1},V_{2}}\cup I_{2}$.
Since $D_{V_{1},V_{2}}$ and $I_{2}$ are groups, either $g^{2},h^{2},g^{2}h^{2}\in D_{V_{1},V_{2}}$
or $g^{2},h^{2},g^{2}h^{2}\in I_{2}$. In both cases we have $T_{\phi}(gh)=T_{\phi}(g)T_{\phi}(h)$.
Therefore, $T_{\phi}\in\widehat{G[2^{\infty}]}.$ Using lemma \ref{lem:(1)-A-function}
we get a monomorphism from $C$ to $\widehat{G}[2^{\infty}]$. We
conclude that $val_{2}(|\widehat{G}|)\geq(r-1)\cdot\frac{q(q+1)}{2}$.
By the assumption on $q$, we have $q\geq5$. Since $r\geq2$, we
have $val_{2}(|\widehat{G}|)\geq15(r-1)\geq2r+1=val_{2}(|G|)$. 
\item If $q\equiv3\mod4$ then let $r=val_{2}(q^{2}-1)$. By lemma \ref{lem:Syl for l|q-1},
the $2$-Sylow subgroups of $G$ are isomorphic to $\mathbb{F}_{q^{2}}^{*}[2^{\infty}]\rtimes Gal(\mathbb{F}_{q^{2}}/\mathbb{F}_{q})$,
thus are dihedral groups of order $2^{r+1}$. Our proof will be a
variation on the proof of lemma \ref{lem:l divides q+1=00003D>conj}.
Let $L_{1},\dotsc,L_{N}$ be the $2$-Sylow subgroups of $G$. For
each $i$, there is a unique cyclic subgroup $C_{i}\leq L_{i}$ of
order $2^{r}$. Given $N$ homomorphisms $\phi_{i}:C_{i}\rightarrow\mathbb{F}_{2}^{+}$
($1\leq i\leq N$), we have for any $1\leq i<j\leq N$, that both
$\phi_{i}$ and $\phi_{j}$ are identically $0$ on $C_{i}\cap C_{j}$.
Hence, the $N$-tuple $(\phi_{1},\dotsc,\phi_{N})$ glues to a function
from $\cup_{i}C_{i}$ to $\mathbb{F}_{2}^{+}$. Let us extend this
function to a function $\phi:G[2^{\infty}]\rightarrow\mathbb{F}_{2}^{+}$
by defining $\phi(g)=0$ for $g\notin\cup C_{i}$. We claim that $\phi\in\widehat{G[2^{\infty}]}$.
Indeed, if $g,h\in G[2^{\infty}]$ commute, they must lie in the same
Sylow subgroup $L_{i}$. If both $g$ and $h$ are not of order $2^{r}$,
then so is $gh$ and we have $\phi(gh)=0=\phi(g)+\phi(h)$. Otherwise,
we may assume without loss of generality that $g$ is of order $2^{r}$.
Then $g$ generates $C_{i}$, hence $h\in C_{i}$ (otherwise $L_{i}$
would be abelian), hence $\phi(gh)=\phi(g)+\phi(h)$. Using lemma
\ref{lem:(1)-A-function} we get a monomorphism from $\prod\hom(C_{i},\mathbb{F}_{2}^{+})$
to $\widehat{G}[2^{\infty}]$. We conclude that $|\widehat{G}|$ is
divisible by $2^{N}$. Finally, $N$ is bounded from below by the
number of cyclic subgroups of $G$ of size $2^{r}$. By lemma \ref{lem:matrices},
we have $N\geq\frac{q(q-1)}{2}$. On the other hand, $val_{2}|G|=val_{2}(q+1)+2\leq q+2$.
For $q=3$ the claim can be verified directly (see section \ref{sec:Numerical-evidence}).
If $q>3$ then by the assumption on $q$, $q\geq7$, and we have $val_{2}(|\widehat{G}|)\geq N\geq\frac{q(q-1)}{2}\geq3q\geq q+2\geq val_{2}(|G|)$.
\end{itemize}
\end{proof}

\subsection{The case $l=p$}
\begin{lem}
$val_{p}(|\widehat{G}|)\geq2val_{p}(|G|).$\end{lem}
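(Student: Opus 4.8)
We want to show $\mathrm{val}_p(|\widehat G|) \ge 2\,\mathrm{val}_p(|G|)$ for $G = GL(2,q)$ with $q = p^t$. Since $\mathrm{val}_p(|G|) = \mathrm{val}_p(q) = t$, this says $p^{2t} \mid |\widehat G|$, i.e. we need at least $2t$ independent elements in $\widehat G[p]$.

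**Approach.** By Lemma \ref{lem:p-sylow} a $p$-Sylow subgroup of $G$ is $U(2,q)$, the group of matrices $\left(\begin{smallmatrix}1 & a\\ 0 & 1\end{smallmatrix}\right)$ with $a\in\mathbb F_q$; this is abelian, isomorphic to $\mathbb F_q^+$, so it alone only gives $p^t$. The point is that the various conjugate $p$-Sylow subgroups overlap trivially (any two distinct ones intersect in the identity, since a nontrivial unipotent in $GL(2,q)$ has a unique fixed line), so $G[p^\infty]$ is a union of subgroups each isomorphic to $\mathbb F_q^+$, glued only at $I$. By Lemma \ref{lem:(1)-A-function} it suffices to produce $2t$ linearly independent functions $f : G[p^\infty]\to\mathbb F_p^+$ that are additive on each commuting pair. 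First I would fix the basic invariant: a nonidentity $g\in G[p^\infty]$ has a well-defined fixed line $L(g)\in\mathbb P^1(\mathbb F_q)$, and two nonidentity elements of $G[p^\infty]$ commute iff they share the same fixed line. So $G[p^\infty]\setminus\{I\}$ partitions into $q+1$ ``rays'', one per line, each ray together with $I$ forming a copy of $\mathbb F_q^+$.

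**Constructing semicharacters.** On each ray $R_L$ there is a natural ``coordinate'': conjugating $g$ to standard unipotent form identifies $R_L\cup\{I\}$ with $\mathbb F_q^+$ up to a scalar, and I would pin this down canonically, e.g. via $g\mapsto g - I$, whose nonzero entry pattern picks out an element of $\mathbb F_q$ well-defined up to the action of the stabilizer of $L$ on $L$ and on $\mathbb P^1/L$. Concretely: for $g\in G[p^\infty]$ with fixed line $L$, pick any nonzero $v\notin L$; then $gv - v \in L$, and writing $gv - v = \lambda(g,v)\, w$ for a fixed basis vector $w$ of $L$ gives a scalar in $\mathbb F_q$; changing $v$ rescales $\lambda$ by a unit, so $\lambda$ is well-defined only up to $\mathbb F_q^*$. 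The cure is to remember that within a single ray the relative ratios \emph{are} canonical: for fixed $L$, the map $g\mapsto \lambda(g,v)$ (with $v$ fixed once and for all for that $L$) is an additive isomorphism $R_L\cup\{I\}\xrightarrow{\sim}\mathbb F_q^+$. Now compose with any of the $t$ independent $\mathbb F_p$-linear functionals $\mathbb F_q^+\to\mathbb F_p^+$ to get $t$ functions on that ray; extend by $0$ on all other rays. Doing this simultaneously for \emph{all} $q+1$ rays and taking the span, commutativity is automatic because any commuting pair lies in a single ray (or involves $I$). This already yields $t(q+1)$ independent functions — far more than $2t$ — provided the gluing is legitimate, i.e. the ``extend by $0$ off this ray'' functions genuinely lie in $\widehat{G[p^\infty]}[p]$, which holds precisely because distinct rays share only $I$ where every such function vanishes.

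**Main obstacle.** The only delicate point is making the per-ray coordinate \emph{globally consistent}, or rather arguing that one does not need global consistency at all. Once it is clear that commuting pairs never straddle two rays, each ray can be handled in complete isolation and the zero-extension is trivially additive on commuting pairs; so the obstacle essentially evaporates. The remaining bookkeeping — checking that a nonidentity unipotent $2\times 2$ matrix has a unique eigenline, that two such commute iff the eigenlines coincide, and that the resulting functions are $\mathbb F_p$-linearly independent (immediate, since they are supported on disjoint rays and restrict on each ray to independent functionals of $\mathbb F_q$) — is routine. I would therefore structure the proof as: (i) describe $G[p^\infty]$ as $q+1$ rays glued at $I$ and the commuting criterion; (ii) fix an additive parametrization $R_L\cup\{I\}\cong\mathbb F_q^+$ for each $L$; (iii) for each $L$ and each functional in a basis of $\mathrm{Hom}_{\mathbb F_p}(\mathbb F_q^+,\mathbb F_p^+)$, build the zero-extended function and verify it is in $\widehat{G[p^\infty]}[p]$; (iv) count: $\dim_{\mathbb F_p}\widehat G[p] \ge t(q+1)\ge 2t$, and invoke the Remark after Lemma \ref{lem:(1)-A-function} to conclude $p^{2t}\mid|\widehat G|$.
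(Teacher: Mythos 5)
Your argument is correct, but only for $n=2$, and it takes a genuinely different route from the paper. You exploit the special geometry of $GL(2,q)$: the $p$-Sylows are the $q+1$ abelian groups $U_L \cong \mathbb{F}_q^{+}$ indexed by lines $L \in \mathbb{P}^1(\mathbb{F}_q)$, distinct Sylows intersect trivially, and two nontrivial unipotents commute iff they share the same fixed line. This lets you glue arbitrary $\mathbb{F}_p$-linear characters Sylow-by-Sylow, extending by zero, in the spirit of Lemma~\ref{lem:CyclicSylow} (and your additive parametrization $g\mapsto\lambda(g,v)$ is a correct, well-defined coordinate on each $U_L$). You get $\dim_{\mathbb{F}_p}\widehat{G}[p]\geq t(q+1)$, which comfortably exceeds $2t$.

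The paper does something else: it defines a discrete logarithm $F(g)=w_{n,p}(g-I)$ on all of $G[p^\infty]$, shows via Lemma~\ref{logmult} that $F$ is additive on commuting pairs, and builds semicharacters $T_{(\phi_{ij})}(g)=\sum_{i\neq j}\phi_{ij}(F(g)_{ij})$ from the off-diagonal entries of $F(g)$, then verifies independence by hitting $F$ with explicit nilpotent matrices whose $w_{n,p}$-image is a single elementary matrix $aE_{ij}$. The key difference is scope: the paper explicitly remarks that the $l=p$ case is handled for \emph{all} $n$, and its argument indeed works for $GL(n,q)$ in general, whereas your Sylow-gluing collapses as soon as $n\geq 3$, because distinct $p$-Sylows $U(n,q)$ then share nontrivial unipotent elements (e.g.\ $I+E_{12}$ lies in the Sylows attached to both the flags $\langle e_1\rangle\subset\langle e_1,e_2\rangle$ and $\langle e_1\rangle\subset\langle e_1,e_3\rangle$ in $GL(3,q)$), so the rays are no longer disjoint and zero-extension is no longer automatically additive on commuting pairs. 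For the $n=2$ application in Theorem~\ref{thm:GL(n)} your proof is cleaner and avoids the $w_{n,p}$ machinery; for general $n$ the paper's discrete-$\log$ construction is essential.
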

\begin{proof}
Let $N$ be the algebra of upper triangular nilpotent $n$ by $n$
matrices, and let $U=I+N$ be the unipotent group. By lemma \ref{lem:p-sylow},
$U$ is a $p$-Sylow subgroup of $G$. We define a function $F:G[p^{\infty}]\rightarrow M(n,q)$
by $F(g)=w_{n,p}(g-1)$, where the polynomial $w_{n,p}$ was defined
before lemma \ref{logmult}. Note that $F(hgh^{-1})=hF(g)h^{-1}$.
If $x,y\in G[p^{\infty}]$ commute, then they lie in a common Sylow
subgroup, hence we may assume that they lie in $U$. Let $a=x-I,b=y-I$,
then $a,b\in N$. By lemma \ref{logmult},

$F(xy)=F((I+a)(I+b))=w_{n,p}(ab+a+b)=w_{n,p}(a)+w_{n,p}(b)=F(I+a)+F(I+b))=F(x)+F(y).$

We will denote by $F(g)_{ij}$ the $(i,j)$ entry of $F(g).$ Given
any $n^{2}-n$ linear functionals $\left(\phi_{ij}\in\hom(\mathbb{F}_{q},\mathbb{F}_{p})\right)_{1\leq i\neq j\leq n}$,
the function $T_{(\phi_{ij})}:G[p^{\infty}]\rightarrow\mathbb{F}_{p}^{+}$
defined by $T_{(\phi_{ij})}(g)=\sum_{i\neq j}\phi_{ij}(F(g)_{ij})$
is in $\widehat{G[p^{\infty}]}[p]$, and by lemma \ref{lem:(1)-A-function}
extends to an element of $\widehat{G}[p]$. Let us see that in this
way, we get a subspace of $\widehat{G}[p]$ of dimension $dim(\mathbb{F}_{q}/\mathbb{F}_{p})(n^{2}-n)=2val_{p}(|G|)$:
Indeed, any matrix of the form $aE_{ij}$ (where $E_{ij}$ is the
elementary matrix with only one nonzero entry $1$ at $(i,j)$) for
$i\neq j$ is in the image of $F$, because if $d$ is the least degree
in the polynomial $w_{n,p}\, mod\, p$, and $M$ is the nilpotent
matrix
\[
\begin{pmatrix}0 & a\\
 & 0 & 1 &  &  & 0\\
 &  & \ddots & \ddots\\
 &  &  & 0 & 1\\
 &  &  &  & 0 & 0\\
 &  & 0 &  &  & \ddots & \ddots\\
 &  &  &  &  &  & 0 & 0\\
 &  &  &  &  &  &  & 0
\end{pmatrix}
\]
 with $d-1$ $1's$, then $w_{n,p}(M)=M^{d}$ has only one nonzero
entry equal to $a$, and by conjugating with a permutation matrix
we get a matrix $A$ satisfying $F(I+A)=w_{n,p}(A)=aE_{ij}$. Hence,
if $T_{(\phi_{ij})}\equiv0$ then $\phi_{ij}=0$ for all $i\neq j$.
\end{proof}
We conclude:
\begin{thm}
\label{thm:GL(n)}For all $q$, conjecture \ref{conjecture} holds
for $GL(2,q).$ 
\end{thm}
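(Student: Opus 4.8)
The plan is to prove Theorem \ref{thm:GL(n)} by assembling the three cases already treated in this section, observing that they cover every prime divisor of $|GL(2,q)|$. Since $\widehat{G}=\prod_{l}\widehat{G}[l^{\infty}]$, it suffices to show $val_{l}(|\widehat{G}|)\geq val_{l}(|G|)$ for each prime $l$ dividing $|G|=(q^{2}-1)(q^{2}-q)=q(q-1)^{2}(q+1)$; multiplying these inequalities over all $l$ then gives $|G|\mid|\widehat{G}|$.

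First I would partition the prime divisors of $|G|$. The prime $p$ (the characteristic) divides $|G|$ through the factor $q$, and is handled by the last lemma of the section, which gives $val_{p}(|\widehat{G}|)\geq 2val_{p}(|G|)\geq val_{p}(|G|)$. Any odd prime $l\neq p$ dividing $|G|$ must divide $q^{2}-1=(q-1)(q+1)$, hence divides either $q-1$ or $q+1$; these are exactly Lemma \ref{lem:l divides q-1=00003D>conj} and Lemma \ref{lem:l divides q+1=00003D>conj} respectively, each yielding $val_{l}(|\widehat{G}|)\geq val_{l}(|G|)$. Finally, if $p\neq 2$ then $l=2$ divides $q^{2}-1$, and the inequality $val_{2}(|\widehat{G}|)\geq val_{2}(|G|)$ is the content of the preceding lemma; if $p=2$, then $2=p$ is already covered by the characteristic case. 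So every prime divisor is accounted for exactly once.

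The only subtlety to check is that no prime is missed or double-counted at the boundary: when $p$ is odd, the primes dividing $|G|$ are $p$ together with the primes dividing $q^{2}-1$, and these sets are disjoint since $p\nmid q^{2}-1$; the prime $2$ lies in the second set and is handled by the $l=2$ lemma, while odd primes in the second set divide exactly one of $q\pm1$ (they cannot divide both, as that would force $l\mid 2$). When $p=2$, the primes dividing $|G|$ are $2$ and the odd primes dividing $q^{2}-1$, handled by the characteristic lemma and the two odd-prime lemmas respectively. In all cases the product of the local inequalities gives the result.

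I do not anticipate a genuine obstacle here: the theorem is purely a bookkeeping corollary, and all the real work has been done in the lemmas. The one place to be slightly careful is simply to state clearly, in the writeup, the case split on whether $p=2$ or $p$ is odd, so that the $l=2$ lemma is invoked only when $p\neq 2$ and is otherwise subsumed by the characteristic-$p$ case. No new construction or estimate is needed.
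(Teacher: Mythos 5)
Your proposal is correct and matches the paper's argument exactly: the theorem is stated as an immediate consequence (``We conclude:'') of the four preceding lemmas, which cover the primes $p$, odd $l\mid q-1$, odd $l\mid q+1$, and $l=2$ when $p\neq2$. Your explicit bookkeeping of the case split is the right content to supply, and there is no gap.
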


\section{Numerical evidence\label{sec:Numerical-evidence}}

When a finite group $G=\{g_{1},\dotsc,g_{n}\}$ is given via its multiplication
table, the group $\widehat{G}$ can be calculated effectively in the
following way: Let $e_{1},\dotsc,e_{n}$ be the standard basis of
$\mathbb{Z}^{n}$. For each pair of commuting elements $g_{i},g_{j}$
we form the element $x(i,j)=e_{i}+e_{j}-e_{k}$, where $g_{i}g_{j}=g_{k}$.
Then $\widehat{G}\cong\mathbb{Z}^{n}/<x(i,j)|g_{i}g_{j}=g_{j}g_{i}>$.
Thus $|\widehat{G}|$ (and also the isomorphism type of $\widehat{G}$)
can be calculated in polynomial time in $|G|$.

We have applied this algorithm to the list of isomorphism types of
groups of order $\leq255$, using the computer algebra system \texttt{GAP}
and the \texttt{SmallGroups} library (there are $7012$ isomorphism
types of groups of order $\leq255$). It turns out that conjecture
\ref{conjecture} holds for all these groups. Since there are $56092$
non-isomorphic groups of order $256$, our computation power was not
sufficient to check the conjecture for all groups of that size.

$\,$

\address{Department of Mathematics and Computer Science, The Open University
of Israel, 1 University Road, p.o. box 808, Raanana, Israel.}

\email{E-mail address: gilal@openu.ac.il}

\begin{thebibliography}{References}
\bibitem[1]{Alali} I.M. AlAli, C. Hering and A. Neumann, \emph{A
number theoretic approach to Sylow $r$-subgroups of classical groups.}
(English summary) Rev. Mat. Complut. 18 (2005), no. 2, 329\textendash{}338. 

\bibitem[2]{Carter&Fong}R. Carter and P. Fong, \emph{The Sylow 2-subgroups
of the fi{}nite classical groups}, J. Algebra 1 (1964), 139\textendash{}151.

\bibitem[3]{Kimmerle} W. Kimmerle, R.Lyons, R. Sandling, and D.N.
Teague, \emph{Composition factors from the group ring and Artin's
Theorem on orders of simple groups}. Proc. London Math. Soc. (3) 60
(1990), no. 1, 89\textendash{}122.

\bibitem[4]{Weir}A. J. Weir, \emph{Sylow p-subgroups of the classical
groups over fi{}nite fi{}elds with characteristic prime to p}, Proc.
Amer. Math. Soc. 6 (1955), 529\textendash{}533.

\end{thebibliography}
\end{document}